\newtheorem*{thm*}{Theorem}
\newtheorem{thm}{Theorem}[section]
\newtheorem{lemma}[thm]{Lemma}
\newtheorem*{prop*}{Proposition}
\newtheorem{prop}[thm]{Proposition}
\newcommand\ex{\ensuremath{\mathrm{ex}}}
\newcommand\cK{{\mathcal K}}
\newcommand\cN{{\mathcal N}}
\newcommand{\ignore}[1]{}
\title{Some stability and exact results in generalized Turán problems}
\author{Dániel Gerbner\\\small Alfr\'ed R\'enyi Institute of Mathematics\\
\small \texttt{gerbner.daniel@renyi.hu}}
\date{}
\begin{document}

\maketitle

\begin{abstract}
Given graphs $H$ and $F$, the generalized Tur\'an number $\mathrm{ex}(n,H,F)$ is the largest number of copies of $H$ in $n$-vertex $F$-free graphs. Stability refers to the usual phenomenon that if an $n$-vertex $F$-free graph $G$ contains almost $\mathrm{ex}(n,H,F)$ copies of $H$, than $G$ is in some sense similar to some extremal graph. We obtain new stability results for generalized Turán problems and derive several new exact results. 
    
\end{abstract}




\section{Introduction}

A fundamental question in graph theory is the following. Given a graph $F$, what is the largest number of edges that an $n$-vertex $F$-free graph can have? This quantity is called the \textit{Tur\'an number} and is denoted by $\ex(n,F)$. Tur\'an \cite{T} proved that $\ex(n,K_{k+1})=|E(T(n,k))|$, where the \textit{Tur\'an graph} $T(n,k)$ is the complete $k$-partite graph with each part of order $\lfloor n/k\rfloor$ or $\lceil n/k\rceil$. Let us call a vertex or an edge of a graph \textit{color-critical} if deleting that vertex or edge decreases the chromatic number. Simonovits \cite{sim} showed that if $\chi(F)=k+1$ and $F$ has a color-critical edge, then for sufficiently large $n$, we have $\ex(n,F)=|E(T(n,k))|$.

Given graphs $H$ and $G$, we denote by $\cN(H,G)$ the number of copies of $H$ in $G$. The \textit{generalized Tur\'an number} $\ex(n,H,F)$ is the largest $\cN(H,G)$ in $n$-vertex $F$-free graphs $G$. The first such result is due to Zykov \cite{zykov}, who showed that $\ex(n,K_r,K_{k+1})=\cN(K_r,T(n,k))$. 

Gerbner and Palmer \cite{gp2} defined \textit{$F$-Tur\'an-good graphs} as graphs $H$ with $\ex(n,H,F)=\cN(H,T(n,\chi(F)-1))$ for sufficiently large $n$. In this language Zykov's theorem implies that cliques are $K_k$-Tur\'an-good (note that his result holds for every $n$). We say that $H$ is \textit{weakly $F$-Tur\'an-good} if $\ex(n,H,F)=\cN(H,T)$ for some complete $(\chi(F)-1)$-partite graph $T$. In this case a straightforward optimization finds $T$ for given $H$, but we are unable to execute this optimization for general $H$. Gy\H ori,  Pach and Simonovits \cite{gypl} showed that complete multipartite graphs are weakly $K_k$-Tur\'an-good. They also showed a bipartite graph which is not $K_3$-Tur\'an-good. 

Gerbner and Palmer \cite{gp2} showed that $C_4$ is $F_2$-Tur\'an-good, where $F_2$ consists of two triangles sharing a vertex. Note that $F_2$ does not have a color-critical edge, thus the Tur\'an-graph is not edge-maximal: one more edge can be added without creating an $F_2$, but that edge is not in any $C_4$. Gerbner \cite{gerbner2} showed that there are $F$-Tur\'an-good graphs if and only if $F$ has a color-critical vertex.

Let us call $H$ \textit{asymptotically $F$-Tur\'an-good} if $\ex(n,H,F)=(1+o(1))\cN(H,T(n,\chi(F)-1))$, and \textit{weakly asymptotically $F$-Tur\'an-good} if $\ex(n,H,F)=(1+o(1))\cN(H,T)$ for some complete $(\chi(F)-1)$-partite graph $T$. A theorem of Gerbner and Palmer \cite{gp2} states that if $\chi(F)=k$, then $\ex(n,H,F)\le \ex(n,H,K_k)+o(n^{|V(H)|}$. It
implies that if $\chi(H)\le k$ and $H$ is (weakly) asymptotically $K_{k+1}$-Tur\'an-good, then $H$ is also (weakly) asymptotically $F$-Tur\'an-good. Another theorem of Gerbner and Palmer \cite{gp3} states that paths are asymptotically $F$-Tur\'an-good for non-bipartite graphs $F$.

\smallskip


In extremal graph theory problems often the graphs where a parameter takes its maximum all have a similar structure. A very common phenomenon is that graphs where the parameter is close to its maximum are in some sense close to the extremal graphs. The prime example is the following theorem.

\begin{thm}[Erd\H os-Simonovits stability theorem \cite{erd1,erd2,simi}]\label{sta}
Let $\chi(F)=k+1$. If $G$ is an $n$-vertex $F$-free graph with $|E(G)|\ge \ex(n,F)-o(n^{2})$, then $G$ can be obtained from $T(n,k)$ by adding and removing $o(n^2)$ edges.
\end{thm}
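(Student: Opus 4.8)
The plan is to prove the Erd\H os--Simonovits stability theorem via a standard supersaturation and symmetrization argument. First I would invoke the supersaturation lemma: if $G$ has $\ex(n,F)-o(n^2)$ edges but density bounded away from $|E(T(n,k))|/\binom n2$, then one can already extract a contradiction; so we may assume $|E(G)| = (1/2)(1-1/k)n^2 - o(n^2)$. The key structural step is to show that $G$ has chromatic number essentially $k$ after deleting few edges. To this end I would pass to a large subgraph of minimum degree at least $(1-1/k)n - o(n)$ by repeatedly deleting low-degree vertices (this changes the edge count by only $o(n^2)$), and then apply a degree-stability form of the Erd\H os--Stone--Simonovits theorem, or argue directly by induction on $k$.

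The inductive core goes as follows. Since $|E(G)| > \ex(n, K_{k+1}) - o(n^2) \ge \ex(n,F') - o(n^2)$ for an appropriate $F' \subseteq F$ with $\chi(F') = k+1$, and since $G$ is $F$-free hence (after cleaning) behaves like a $K_{k+1}$-free graph on the relevant scale, I would find a large complete $(k-1)$-partite-like substructure: pick an edge $xy$; by minimum degree the common neighborhood $N(x)\cap N(y)$ has size $\ge (1-2/k)n - o(n)$ and is $F$-free of a smaller kind, so by induction on $k$ it is within $o(n^2)$ edges of $T(m,k-1)$ with $m = |N(x)\cap N(y)|$. Lifting this partition back to $G$ — assigning each remaining vertex to the class in which it has fewest neighbors — yields a partition $V(G) = V_1 \cup \dots \cup V_k$ such that the number of edges inside the classes is $o(n^2)$; the edge count forces each $|V_i| = n/k + o(n)$ and forces almost all pairs between classes to be edges, which is exactly the assertion that $G$ is obtained from $T(n,k)$ by adding and deleting $o(n^2)$ edges.

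The main obstacle I expect is making the "lifting" step quantitatively honest: one must ensure that after assigning each outside vertex to its sparsest class, the total number of bad (intra-class) edges stays $o(n^2)$ rather than merely $o(n^2)$ per vertex times $n$ vertices, which would be useless. The resolution is that a vertex placed in class $V_i$ with $\Omega(n)$ neighbors inside $V_i$ would, together with the dense $(k-1)$-partite structure on $N(x)\cap N(y)$, complete a copy of $K_{k+1}$ (and then, by supersaturation within the already-found near-Tur\'an structure, a copy of $F$), so only $o(n)$ vertices can be "bad" and each contributes $O(n)$ bad edges. Handling the interface between an abstract forbidden graph $F$ with $\chi(F)=k+1$ and the clique $K_{k+1}$ — i.e.\ upgrading "contains $K_{k+1}$" to "contains $F$" — is done by one more application of supersaturation: a graph with $\ge c n^{k+1}$ copies of $K_{k+1}$ contains $F$ once $n$ is large, since $F$ embeds into a blow-up of $K_{k+1}$. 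All the $\varepsilon$--$\delta$ bookkeeping is routine once these two supersaturation inputs are in place.
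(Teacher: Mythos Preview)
The paper does not prove this statement: Theorem~\ref{sta} is the classical Erd\H os--Simonovits stability theorem, quoted with references and used as a black box throughout. There is therefore no ``paper's own proof'' to compare your proposal against.

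That said, your sketch is a recognisable outline of one of the standard proofs (reduce to the $K_{k+1}$-free case via removal/supersaturation, clean to high minimum degree, then induct on $k$ by looking inside a neighbourhood). One slip: if you pick an \emph{edge} $xy$ and work in $N(x)\cap N(y)$, that set is $K_{k-1}$-free, so induction yields a structure close to $T(m,k-2)$, not $T(m,k-1)$; you then recover $k$ parts by adding the two classes containing $x$ and $y$ separately. The more common variant picks a single vertex $v$, whose neighbourhood is $K_k$-free and hence close to $T(|N(v)|,k-1)$, with $V(G)\setminus N(v)$ supplying the $k$th class. Either way the lifting step and the supersaturation upgrade from $K_{k+1}$ to $F$ are as you describe, and the argument goes through once the indexing is corrected.
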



\smallskip

Let us turn to known stability results concerning generalized Tur\'an problems. As most of these results are similar to Theorem \ref{sta}, we introduce a notation.

Let $\chi(H)<\chi(F)=k+1$.
We say that $H$ is \textit{$F$-Tur\'an-stable} if the following holds. If $G$ is an $n$-vertex $F$-free graph with $\cN(H,G)\ge \ex(n,H,F)-o(n^{|V(H)|})$, then $G$ can be obtained from $T(n,k)$ by adding and removing $o(n^2)$ edges. Theorem \ref{sta} states that $K_2$ is $F$-Tur\'an-stable for every non-bipartite $F$. We say that $H$ is \textit{weakly $F$-Tur\'an-stable} if the following holds. If $G$ is an $n$-vertex $F$-free graph with $\cN(H,G)\ge \ex(n,H,F)-o(n^{|V(H)|})$, then $G$ can be obtained from a complete $k$-partite graph by adding and removing $o(n^2)$ edges. We remark that if $H$ is (weakly) $F$-Tur\'an-stable, then $H$ is (weakly) asymptotically $F$-Tur\'an-good.

Ma and Qiu \cite{mq} obtained the following generalization of Theorem \ref{sta}.

\begin{thm}[Ma, Qiu \cite{mq}]\label{maqi}
Let $r<\chi(F)$. Then $K_r$ is $F$-Tur\'an-stable.
\end{thm}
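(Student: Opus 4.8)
The plan is to combine the Erdős–Simonovits stability theorem (Theorem~\ref{sta}) with a counting argument showing that, among $F$-free graphs, the number of copies of $K_r$ is (asymptotically) maximized by graphs that are themselves close to $T(n,k)$. Write $k+1 = \chi(F)$. First I would record the asymptotics $\ex(n,K_r,F) = (1+o(1))\cN(K_r,T(n,k))$, which follows from Zykov's theorem ($K_r$ is $K_{k+1}$-Turán-good) together with the Gerbner–Palmer comparison $\ex(n,K_r,F)\le \ex(n,K_r,K_{k+1})+o(n^r)$ cited in the excerpt. So a near-extremal $F$-free graph $G$ has $\cN(K_r,G)\ge (1-o(1))\cN(K_r,T(n,k))$.

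The core step is a \emph{supersaturation / Kruskal–Katona-type} argument transferring the $K_r$-count lower bound into an edge lower bound. The idea: since $G$ is $F$-free and $\chi(F)=k+1$, $G$ contains at most $o(n^{k+1})$ copies of $K_{k+1}$ (by supersaturation: a graph with $\ex(n,K_{k+1})+\epsilon n^2$ edges contains $\Omega(n^{k+1})$ copies of $K_{k+1}$, hence many copies of $F$; more carefully, one uses that an $F$-free graph has at most $\ex(n,K_{k+1})+o(n^2)$ edges, which is immediate from Theorem~\ref{sta} or even just from Erdős–Stone). Then I would invoke a Kruskal–Katona / clique-counting inequality of the form: if a graph on $n$ vertices has $e(G)\le \ex(n,K_{k+1})+o(n^2)$ and $\cN(K_r,G)\ge (1-o(1))\cN(K_r,T(n,k))$, then in fact $e(G)\ge \ex(n,K_{k+1}) - o(n^2) = |E(T(n,k))|-o(n^2)$. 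This is a convexity statement: the number of $K_r$'s in a $K_{k+1}$-"almost-free" graph is a convex-type function of the edge count maximized at the Turán graph, so near-maximality of the $K_r$-count forces near-maximality of the edge count. Once $e(G)\ge \ex(n,F)-o(n^2)$ is established (using $\ex(n,F)=|E(T(n,k))|+o(n^2)$), Theorem~\ref{sta} applies directly and yields that $G$ is obtained from $T(n,k)$ by adding and removing $o(n^2)$ edges, which is exactly $F$-Turán-stability.

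The main obstacle I anticipate is making the clique-counting-to-edge-count implication quantitative and clean: one needs that an $n$-vertex graph with almost the extremal number of $K_r$'s but \emph{essentially no} $K_{k+1}$'s cannot be "deficient" in edges. The natural tool is a removal-type or stability-for-clique-counts argument — e.g., pass to a graph $G'$ obtained from $G$ by deleting a vertex from each copy of $K_{k+1}$ (removing only $o(n)$ vertices if copies are few, but here we may have $o(n^{k+1})$ copies, so a more careful fractional removal or a direct application of a known "Turán-stability for $\cN(K_r,\cdot)$ under the $K_{k+1}$-free constraint" is needed). I would look to adapt the Ma–Qiu style argument or the earlier work on $K_r$ being $K_{k+1}$-Turán-stable: first prove the $F = K_{k+1}$ case (where the $K_{k+1}$-count is exactly zero, so the convexity argument is cleanest), then bootstrap to general $F$ with $\chi(F)=k+1$ by absorbing the $o(n^{k+1})$ copies of $K_{k+1}$ into the error terms. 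The delicate bookkeeping of these error terms — ensuring the $o(n^{|V(H)|})$ slack in the hypothesis really does propagate to an $o(n^2)$ edge-edit distance — is where the real work lies.
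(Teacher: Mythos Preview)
The paper does not give its own proof of this theorem; it is quoted as a result of Ma and Qiu \cite{mq} and used as input elsewhere. That said, your reduction from a general $F$ with $\chi(F)=k+1$ to the case $F=K_{k+1}$ is correct and is precisely what the paper carries out in Proposition~\ref{kovv}: by Proposition~\ref{ash} an $F$-free graph contains $o(n^{k+1})$ copies of $K_{k+1}$, the removal lemma then deletes $o(n^2)$ edges to produce a $K_{k+1}$-free subgraph $G_0$, and only $o(n^r)$ copies of $K_r$ are lost. So once one knows $K_r$ is $K_{k+1}$-Tur\'an-stable, $F$-Tur\'an-stability follows immediately. On this part your outline and the paper agree.

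The gap is in how you propose to handle the base case $F=K_{k+1}$. Your plan is to show that a $K_{k+1}$-almost-free graph with $(1-o(1))\cN(K_r,T(n,k))$ copies of $K_r$ must have $(1-o(1))|E(T(n,k))|$ edges, then invoke Theorem~\ref{sta}. But the tool you name, Kruskal--Katona, does not deliver this: from $\cN(K_r,G)\approx\binom{k}{r}(n/k)^r$ it only yields $e(G)\gtrsim \tfrac12\bigl(k!/(k-r)!\bigr)^{2/r}(n/k)^2$, which for $r\ge 3$ is strictly smaller than $|E(T(n,k))|\approx \tfrac12(1-1/k)n^2$ (already for $k=r=3$ one gets roughly $0.183\,n^2$ versus $0.333\,n^2$). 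A clique on $cn$ vertices for the appropriate $c<1$ matches the $K_r$-count, has fewer than $|E(T(n,k))|$ edges, and fails only the $K_{k+1}$-freeness --- so that hypothesis must be used in a way Kruskal--Katona does not capture. Your ``convexity'' remark is not an argument, and in your final paragraph you essentially defer back to the Ma--Qiu proof itself, which is circular. A genuine proof of the $K_{k+1}$-case requires more specific structural work (degree/minimum-degree arguments, symmetrisation with a stability analysis, or the techniques in \cite{mq}) than anything in your sketch.
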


A different kind of stability result is due to the author \cite{ge}. Assume that $F$ has a color-critical edge and $\chi(F)>r$. If $G$ is an $n$-vertex $F$-free graph with chromatic number more than $\chi(F)-1$, then $\ex(n,K_r,F)-\cN(K_r,G)=\Omega(n^{r-1})$.


Most other stability results in this area were obtained either as a lucky coincidence when the proof of a bound on $\ex(n,H,F)$ gives a stronger result (e.g. $\ex(n,P_4,C_5)$ in \cite{gp3}), or as a lemma towards a bound on $\ex(n,H,F)$ (e.g. $\ex(n,C_5,K_k)$ in \cite{lm}). 

A more systematic approach to this latter type of stability results is due to Hei, Hou and Liu \cite{hhl}. They showed that if $\chi(H)<\chi(F)$, $F$ has a color-critical edge, the Tur\'an graph contains the most copies of $H$ among complete $k$-partite graphs and $H$ is $F$-Tur\'an-stable, then $H$ is $F$-Tur\'an-good. In fact, instead of $F$-Tur\'an-stability, they used the following weaker property: If $G$ is an $n$-vertex $F$-free graph with $\cN(H,G)= \ex(n,H,F)$, then $G$ can be obtained from $T(n,k)$ by adding and removing $o(n^2)$ edges\footnote{Their statement is weaker. They use is later for $H$ satisfying additional properties, and they assume those properties in this theorem, but those properties are not actually used in the proof.}. They also showed that if $F$ has a color-critical edge, then paths are $F$-Tur\'an-stable\footnote{Their statement is again weaker. They use it later in the above described way, thus they only need to consider $n$-vertex $F$-free graphs $G$ with $\cN(P_k,G)=\ex(n,P_k,F)$, but their proof also works for $G$ with $\cN(P_k,G)\ge \ex(n,P_k,F)-o(n^k)$.}.

Liu, Pikhurko, Sharifzadeh and Staden \cite{lpss} introduced a general framework for studying graphs parameters that do not decrease by Zykov symmetrization, and proved that under some conditions, a stability result is also implied. Symmetrization does not decrease $\ex(n,H,K_k)$ if $H$ is complete multipartite. It is not hard to see that the additional conditions are also satisfied, thus their results imply that complete multipartite graphs are weakly $K_k$-Tur\'an-stable.

\smallskip

Now we are ready to list our contributions. 


\begin{prop}\label{kovv}
Let $\chi(F)=k+1$ and $H$ be weakly asymptotically $F$-Tur\'an-good and weakly $K_{k+1}$-Tur\'an-stable. Then $H$ is weakly $F$-Tur\'an-stable. Furthermore, if $H$ is asymptotically $F$-Tur\'an-good and $K_{k+1}$-Tur\'an-stable, then $H$ is $F$-Tur\'an-stable.
\end{prop}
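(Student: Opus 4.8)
The plan is to show that an $F$-free graph $G$ with nearly extremal $H$-count must already be nearly extremal as a $K_{k+1}$-free graph, and then invoke the assumed $K_{k+1}$-Tur\'an-stability of $H$. First I would fix the relevant parameters: let $T$ be the complete $k$-partite graph with $\cN(H,T)=\ex(n,H,K_{k+1})$ in the weak case (respectively $T=T(n,k)$ in the second case), and let $G$ be an $n$-vertex $F$-free graph with $\cN(H,G)\ge\ex(n,H,F)-o(n^{|V(H)|})$. The key chain of inequalities is
\[
\cN(H,G)\ge\ex(n,H,F)-o(n^{|V(H)|})\ge\cN(H,T)-o(n^{|V(H)|}),
\]
where the second inequality uses that $T$ is $F$-free (it is $K_{k+1}$-free and $\chi(F)=k+1$) together with the assumption that $H$ is weakly asymptotically $F$-Tur\'an-good, which gives $\ex(n,H,F)=(1+o(1))\cN(H,T)$. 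On the other hand, by the Gerbner--Palmer theorem quoted in the excerpt, $\cN(H,G)\le\ex(n,H,K_{k+1})+o(n^{|V(H)|})=\cN(H,T)+o(n^{|V(H)|})$ since $\chi(F)=k+1$.

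The slightly subtle point is that $G$ need not be $K_{k+1}$-free, so I cannot directly feed it into the $K_{k+1}$-Tur\'an-stability hypothesis; I only know its $H$-count is close to $\ex(n,H,K_{k+1})$. I would handle this by passing to a $K_{k+1}$-free subgraph: by a standard argument (for instance, taking a maximum-size $K_{k+1}$-free subgraph, or removing one edge from each copy of $K_{k+1}$ via a fractional/greedy argument, noting that if $G$ had $\Omega(n^{|V(H)|})$ copies of $K_{k+1}$ one could not have $\cN(H,G)$ close to $\ex(n,H,K_{k+1})$), one obtains a $K_{k+1}$-free graph $G'$ on the same vertex set with $\cN(H,G')\ge\cN(H,G)-o(n^{|V(H)|})\ge\ex(n,H,K_{k+1})-o(n^{|V(H)|})$. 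Then $H$ being weakly $K_{k+1}$-Tur\'an-stable (resp.\ $K_{k+1}$-Tur\'an-stable) yields that $G'$ — and hence $G$, since they differ in only $o(n^2)$ edges — is obtained from some complete $k$-partite graph (resp.\ from $T(n,k)$) by adding and removing $o(n^2)$ edges. That is exactly the conclusion that $H$ is weakly $F$-Tur\'an-stable (resp.\ $F$-Tur\'an-stable).

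The main obstacle I anticipate is making the ``pass to a $K_{k+1}$-free subgraph without losing too many copies of $H$'' step fully rigorous with the right quantifiers: one must argue that few copies of $K_{k+1}$ in $G$ are forced, and that deleting an edge from each destroys only $o(n^{|V(H)|})$ copies of $H$. The cleanest route is probably to observe that $\cN(K_{k+1},G)=o(n^{k+1})$ must hold (otherwise a supersaturation/averaging argument would contradict $\cN(H,G)\approx\ex(n,H,K_{k+1})\le\cN(H,K_n)$-type bounds, or more simply one can cite that the extremal $K_{k+1}$-free graphs are the only near-extremizers for $\cN(H,\cdot)$ among all graphs up to $o(n^{|V(H)|})$ via Gerbner--Palmer), and then each edge lies in $O(n^{|V(H)|-2})$ copies of $H$, so deleting $o(n^2)$ edges costs $o(n^{|V(H)|})$ copies. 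Everything else is a direct substitution into the definitions and the two cited theorems, so once this reduction is in place the proposition follows immediately in both the weak and the non-weak versions, the only difference being whether the target is ``some complete $k$-partite graph'' or ``$T(n,k)$''.
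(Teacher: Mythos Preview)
Your proposal is correct and follows essentially the same route as the paper: reduce to a $K_{k+1}$-free graph losing only $o(n^{|V(H)|})$ copies of $H$, then invoke the assumed (weak) $K_{k+1}$-Tur\'an-stability. The paper makes the ``pass to a $K_{k+1}$-free subgraph'' step cleaner than you do: since $G$ is $F$-free and $\chi(F)=k+1$, Proposition~\ref{ash} (Alon--Shikhelman) gives $\cN(K_{k+1},G)=o(n^{k+1})$ directly, and then the removal lemma yields a $K_{k+1}$-free $G_0$ differing from $G$ in $o(n^2)$ edges---your parenthetical justifications for $\cN(K_{k+1},G)=o(n^{k+1})$ via supersaturation or near-extremizers are unnecessary detours, and the jump from ``$o(n^{k+1})$ copies'' to ``delete $o(n^2)$ edges'' is exactly the removal lemma, which you should name explicitly.
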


Combining the above proposition with the known results mentioned earlier, we obtain that complete multipartite graphs and paths are $F$-Tur\'an-stable for every $F$ with larger chromatic number.

We can extend the result of Hei, Hou and Liu \cite{hhl} to the weak case.

\begin{thm}\label{haszn}
Let $\chi(F)>\chi(H)$ and assume that $F$ has a color-critical edge. If $H$ is weakly $F$-Tur\'an-stable, then $H$ is weakly $F$-Tur\'an-good. Moreover, the same porperty is also implies by the weaker assumption that there is an $n$-vertex $F$-free graph $G$ with $\cN(H,G)= \ex(n,H,F)$ can be obtained from a complete $(\chi(F)-1)$-partite graph by adding and removing $o(n^2)$ edges 
\end{thm}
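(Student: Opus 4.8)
Set $k=\chi(F)-1$. The plan is to prove the stronger assertion that, for $n$ large, every $n$-vertex $F$-free graph $G$ with $\cN(H,G)=\ex(n,H,F)$ is itself complete $k$-partite (under the ``Moreover'' hypothesis: that some such $G$ is). This suffices, since a complete $k$-partite graph is $F$-free (its chromatic number is at most $k<\chi(F)$): if $G$ is complete $k$-partite and extremal then $\ex(n,H,F)=\cN(H,G)=\cN(H,T)$ with $T=G$, which is weak $F$-Tur\'an-goodness. (We may assume $H$ has an edge, as otherwise $\cN(H,\cdot)$ is constant.)

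Fix such a $G$. By hypothesis $G$ arises from a complete $k$-partite graph by adding and deleting $o(n^2)$ edges; fix a $k$-partition $P=(V_1,\dots,V_k)$ of $V(G)$ minimizing $\mathrm{d}(G,P):=\#\{\text{edges of }G\text{ inside some }V_i\}+\#\{\text{non-edges of }G\text{ between distinct }V_i,V_j\}$, so $\mathrm{d}(G,P)=o(n^2)$. I will use: (i) \emph{local optimality} — moving one vertex between parts cannot decrease $\mathrm{d}(G,P)$, which bounds for each $v$ its number of neighbours inside its part in terms of its numbers of neighbours in the other parts; (ii) since both summands of $\mathrm{d}(G,P)$ are $o(n^2)$, all but $o(n)$ vertices are \emph{good}, i.e.\ have $o(n)$ neighbours inside their part and $o(n)$ non-neighbours outside it; (iii) every $V_i$ has linear size — otherwise $G$ lies within $o(n^2)$ edges of a complete $k$-partite graph with a part of size $o(n)$, so $\cN(H,G)\le\cN(H,K_P)+o(n^{|V(H)|})$, but because $H$ has an edge the complete $k$-partite graph on $n$ vertices with the most copies of $H$ has all parts of linear size, whence $\cN(H,K_P)\le\cN(H,T(n,k))-\Omega(n^{|V(H)|})$ and $\cN(H,G)<\ex(n,H,F)$, a contradiction.

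I now clean up $G$ using the colour-critical edge $e=xy$ of $F$. As $\chi(F-e)=k<\chi(F)$, in every proper $k$-colouring of $F-e$ the vertices $x,y$ get the same colour; fix one such colouring and let $C_1$ be the class containing $x,y$. This shows: in any complete $k$-partite graph all of whose parts have at least $|V(F)|$ vertices, adding an edge inside a part creates a copy of $F$ (send $C_1$ into that part with $x,y$ onto the new edge, and the other classes into the other parts). I claim $G$ has no edge inside a part. If $u_1u_2\in E(G)$ with $u_1,u_2$ in a common part $V_i$ and both good, run this embedding inside $G$: discard the $o(n)$ non-good vertices, set $x\mapsto u_1$, $y\mapsto u_2$, place the rest of $C_1$ on good vertices of $V_i$, and place the remaining classes on good vertices of the other parts, greedily; each placed vertex forbids only the used vertices and, for each previously placed (good) neighbour, its $o(n)$ non-neighbours in the target part — altogether $o(n)$ forbidden vertices in a part of linear size — so the embedding succeeds, contradicting $F$-freeness. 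The remaining within-part edges, all incident to the $o(n)$ non-good vertices, require a more careful but similar argument, running the $F$-embedding with a non-good endpoint in a suitable role and using (i) to control its degrees; this step is carried out in \cite{hhl}. Now $G$ is a subgraph of the complete $k$-partite graph $K_P$, hence $k$-partite, so adding any between-part edge keeps it $k$-partite and $F$-free; if $uv$ were a between-part non-edge of $G$ then $G+uv$ is $F$-free with $\cN(H,G+uv)\ge\cN(H,G)=\ex(n,H,F)$, forcing equality and hence that $uv$ lies in no copy of $H$ in $G+uv$ — but since $\chi(H)\le k$ and $H$ has an edge, a copy of $H$ through $uv$ is produced by the same greedy embedding (send the two classes of an edge of $H$ to the parts of $u$ and $v$, with that edge onto $uv$), a contradiction. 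Therefore $G=K_P$ is complete $k$-partite, as required.

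This is the argument of Hei, Hou and Liu \cite{hhl} with their extra hypothesis removed: they additionally assume $T(n,k)$ maximizes $\cN(H,\cdot)$ among complete $k$-partite graphs and use this only to identify $K_P$ with $T(n,k)$ in the last line, so dropping it costs only the weakening of ``$F$-Tur\'an-good'' to ``weakly $F$-Tur\'an-good''. The main technical obstacle is the cleaning step — keeping the $o(n)$ non-good vertices and the geometry of the parts under control through the greedy embeddings and excluding every within-part edge — which is precisely where the HHL proof does its work, and it carries over unchanged; everything else is routine Simonovits-type bookkeeping.
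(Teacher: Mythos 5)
Your skeleton (edit-minimal $k$-partition, good/bad vertices, linear parts via the exchange argument of Lemma \ref{neww}, greedy embedding of $F$ along the colour-critical edge to kill within-part edges between two good vertices) is the same as the paper's proof of Theorem \ref{haszn2} specialised to $r=1$. However, the step you label ``the main technical obstacle'' and outsource to \cite{hhl} --- eliminating the within-part edges incident to the $o(n)$ non-good vertices --- is the heart of the argument, and the method you sketch for it cannot work. A non-good vertex $u$ may be missing $\Omega(n)$ cross-edges to \emph{every} other part; local optimality of the partition only guarantees that $u$ has at least as many neighbours in each other part as inside its own, and all of these numbers can be $o(n)$ (indeed $O(1)$) simultaneously. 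For such a $u$, no embedding of $F$ through a within-part edge at $u$ is possible in any ``suitable role'', because $F$-freeness alone simply does not forbid that edge: $T(n,k)$ with two vertices of one part stripped of all their edges and then joined to each other is $F$-free. What forbids such edges is extremality, via a counting argument you never make. The paper's version: let $X$ be a minimum vertex cover, inside the set of non-good vertices, of the within-part edges (the good-good case having been handled); each $u\in X$ misses $\Omega(n)$ cross-edges and hence lies on $\Omega(n^{|V(H)|-1})$ fewer copies of $H$ than it would in $T$, so $G$ loses $\Omega(n^{|V(H)|-1}|X|)$ copies relative to $T$, while the within-part edges number only $O(|X|^2)=o(n|X|)$ and can create only $o(n^{|V(H)|-1}|X|)$ copies; extremality of $G$ and $F$-freeness of $T$ then force $X=\emptyset$. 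Without this weighing of losses against gains your proof does not close.

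Two secondary points. First, once the within-part edges are gone you do not need to upgrade $G\subseteq K_P$ to $G=K_P$: the chain $\ex(n,H,F)=\cN(H,G)\le\cN(H,K_P)\le\ex(n,H,F)$ already yields weak $F$-Tur\'an-goodness, and your edge-by-edge greedy embedding of $H$ through a missing cross-edge has the same difficulty as above when an endpoint is non-good (the equality $G=K_P$ does follow, but by comparing $\cN(H,G)$ with $\cN(H,K_P)$ globally). Second, \cite{hhl} run their cleaning step against $T(n,k)$ for paths, whereas here $T$ is an arbitrary complete $k$-partite graph and $H$ is arbitrary with $\chi(H)<\chi(F)$; this is why Lemma \ref{neww} is needed before any greedy embedding can be run, and another reason the citation cannot simply be declared to ``carry over unchanged''.
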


Combined with results mentioned earlier, the above theorem implies that complete multipartite graphs $H$ are weakly $F$-Tur\'an-good if $\chi(H)<\chi(F)$ and $F$ has a color-critical edge. This was proved by the author \cite{ger} in the case $\chi(F)=3$. Note that if $H$ is $F$-Tur\'an-stable, then $H$ is weakly $F$-Tur\'an-good by the above theorem, but not necessarily $F$-Tur\'an-good: it is possible that the extremal graph is only slightly unbalanced. This is the case for $H=K_{1,3}$ and $F=K_3$; it was shown by Brown and Sidorenko \cite{brosid} that the bipartite graph with the most copies of $K_{1,3}$ is either $K_{k,n-k}$ or $K_{k+1,n-k-1}$, where $k=\lfloor \frac{n}{2}-\sqrt{(3n-4)/2}\rfloor$.

Theorem \ref{haszn} is the special case $r=1$ of the next theorem.

\begin{thm}\label{haszn2}
Let $\chi(F)=k+1$, $\chi(H)=k$ and assume that $F$ has a color-critical vertex. Let $r$ be the smallest number such that there is a color-critical vertex $v$ in $F$ that is adjacent to exactly $r$ vertices of one of the color classes in the $k$-coloring of the graph we obtain by deleting $v$ from $F$. Assume that if we embed graphs of maximum degree less than $r$ into each part of a complete $k$-partite graph $T_0$, we do not obtain any copies of $H$ besides those contained in $T_0$.

Assume that there is an $n$-vertex $F$-free graph $G$ with $\cN(H,G)= \ex(n,H,F)$, such that $G$ can be obtained from a complete $k$-partite graph $T$ by adding and removing $o(n^2)$ edges. Then $H$ is weakly $F$-Tur\'an-good.
\end{thm}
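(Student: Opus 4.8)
The plan is to adapt the Hei--Hou--Liu argument (their $r=1$ case being Theorem~\ref{haszn}) to arbitrary $r$, the key point being that a color-critical vertex $v$ with only $r$ neighbors in some color class allows us to ``clean up'' the extremal graph not to a complete $k$-partite graph, but to one in which each part carries a graph of bounded maximum degree, and then the hypothesis on $H$ and $T_0$ kills those extra edges without losing copies of $H$. Concretely, let $G$ be the extremal graph furnished by the hypothesis, obtained from a complete $k$-partite $T$ with parts $V_1,\dots,V_k$ by adding and removing $o(n^2)$ edges. First I would record the standard consequences of $\cN(H,G)=\ex(n,H,F)\ge (1-o(1))\cN(H,T(n,k))$ together with the $o(n^2)$-closeness to $T$: all but $o(n)$ vertices have ``typical'' degree (roughly $(1-1/k)n$), the number of edges inside the parts is $o(n^2)$, and — crucially — no vertex can have more than a constant number of neighbors inside its own part, for otherwise a supersaturation/counting argument produces either a copy of $F$ (using the color-critical vertex $v$ and the fact that $v$ needs only $r$ neighbors in one class) or shows that moving that vertex to another part strictly increases $\cN(H,\cdot)$, contradicting extremality.

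Next, the heart of the argument: I would show that in fact every vertex has \emph{fewer than $r$} neighbors inside its own part. Suppose some $x\in V_i$ has $\ge r$ neighbors in $V_i$. Since $F$ has a color-critical vertex $v$, $F-v$ is $k$-colorable; embed a copy of $F-v$ across the parts of $G$ respecting the colors (possible because $G$ is a near-complete $k$-partite graph with only $o(n)$ atypical vertices), arranging that the $r$ vertices of the relevant color class that $v$ must be joined to land among the $r$ inside-neighbors of $x$ in $V_i$; then $x$ plays the role of $v$ and we get a copy of $F$ in $G$, a contradiction. (One must check there is enough room to place $F-v$ avoiding the bad vertices and hitting the prescribed targets; this is the usual embedding bookkeeping and is where the precise definition of $r$ is used.) Hence every part induces a graph of maximum degree $<r$.

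Now apply the hypothesis on $H$: $G$ is obtained from the complete $k$-partite graph $T'$ on the same parts $V_1,\dots,V_k$ by adding the inside-edges (forming graphs of max degree $<r$ in each part) and deleting some cross-edges. I would argue that deleting cross-edges only decreases $\cN(H,\cdot)$, and that \emph{adding} the inside-edges creates no new copies of $H$ beyond those already in $T'$ — this is exactly the stated assumption applied with $T_0=T'$. Therefore $\cN(H,G)\le \cN(H,T')\le \ex(n,H,F)$, forcing equality, so the complete $k$-partite graph $T'$ is itself extremal; optimizing part sizes among complete $k$-partite graphs (a finite optimization) gives a complete $k$-partite extremal graph, i.e.\ $H$ is weakly $F$-Tur\'an-good. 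The weaker hypothesis stated in Theorem~\ref{haszn} (one extremal $G$ that is $o(n^2)$-close to complete $(k-1+1)$-partite) is what we used, so the moreover-clause there is the $r=1$ specialization.

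The main obstacle I expect is the embedding step showing ``$\ge r$ inside-neighbors $\Rightarrow$ copy of $F$'': one needs that a near-complete $k$-partite host with $o(n)$ exceptional vertices admits a copy of $F-v$ that is \emph{aligned} with its $k$-coloring \emph{and} routes the designated color class through a prescribed constant-size target set in one part, all while dodging low-degree vertices — and one must handle the possibility that deleting $v$ from $F$ admits several $k$-colorings, choosing the one realizing the minimum $r$. A secondary subtlety is making sure the ``moving a vertex'' exchange argument that bounds inside-degrees by a constant is valid, i.e.\ that a vertex with many inside-neighbors but few cross-neighbors contributes strictly fewer copies of $H$ than it would after relocation; this requires $\chi(H)=k$ so that $H$-copies genuinely use all $k$ parts and a vertex is ``worth more'' in a part where it has full cross-degree. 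Both are routine in spirit but require care with the quantifiers hidden in the $o(\cdot)$ notation.
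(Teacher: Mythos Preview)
Your overall plan is right, and the final step (once the inside-edge graph has maximum degree $<r$, apply the $T_0$-hypothesis to get $\cN(H,G)\le\cN(H,T')$) is exactly how the paper finishes. The gap is in your ``heart of the argument'': you cannot show by a direct embedding that every $x\in V_i$ has fewer than $r$ neighbours in $V_i$. Your embedding places the $r$ vertices $v_1,\dots,v_r$ of $F$ at the prescribed inside-neighbours $x_1,\dots,x_r$ of $x$ and then greedily completes $F-v$ across the other parts; for the greedy step to go through you need each $x_j$ to be adjacent to almost all of every other $V_\ell$, i.e.\ to be \emph{typical}. Nothing rules out the case that $x_1,\dots,x_r$ are all atypical (each missing $\Omega(n)$ cross-edges), and then the common neighbourhood you must embed into may be too small. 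You flag this obstacle but call it routine; it is not, and your preliminary ``inside-degrees bounded by a constant'' step neither follows from the exchange argument you sketch nor fixes the problem, since a vertex can have few inside-neighbours and still be atypical.

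The paper circumvents this by proving only the weaker claim that every vertex in $V_i$ has fewer than $r$ neighbours in $A_i$, the set of typical vertices of $V_i$; here the prescribed targets are typical by fiat and the greedy embedding works. Consequently every $K_{1,r}$ inside $E$ must contain an atypical vertex. The paper then takes a smallest hitting set $X\subseteq V(G)\setminus A$ for these stars and runs a counting comparison: each $u\in X$ is missing $\Omega(n)$ cross-edges, so $\sum_{u\in X}r(u)=\Omega(n|X|)$, while the number of $E$-edges incident to $X$ is only $\binom{|X|}{2}+|X|(r-1)=o(n|X|)$ since $|X|=o(n)$. Balancing the $H$-copies lost to missing cross-edges against those possibly gained from inside-edges forces $|X|=0$, hence $E$ has maximum degree $<r$ after all. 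Thus the extremality of $G$ is used a second time---not just to set up the partition---precisely to kill the atypical-neighbour configurations your embedding cannot reach.
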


We remark that the assumptions of the theorem ensure that we can embed a graph with maximum degree $r-1$ into one part of $T$, but the resulting graph contains $\cN(H,T)$ copies of $H$.


\smallskip

Using stability, we can obtain a result on the structure of the extremal graphs for $\ex(n,H,F)$ in some cases.
Let $d_G(H,v)$ denote the number of copies of $H$ containing $v$ in $G$. Observe that for vertices $u,v$ in $T(n,k)$, we have $d_{T(n,k)}(H,v)=(1+o(1))d_{T(n,k)}(H,u)$.  

\begin{thm}\label{struc}
Let $\chi(H)<\chi(F)=k+1$, and assume that $H$ is $F$-Tur\'an-stable. 
Let $G$ be an $n$-vertex $F$-free graph which contains $\ex(n,H,F)$ copies of $F$. Then for every vertex $u\in G$ we have $d_G(H,u)\ge (1+o(1))d_{T(n,k)}(H,v)$.
\end{thm}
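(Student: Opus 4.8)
The plan is to argue by contradiction: suppose there is a vertex $u$ in the extremal graph $G$ with $d_G(H,u) < (1-\varepsilon)d_{T(n,k)}(H,v)$ for some fixed $\varepsilon>0$ along a sequence of $n$'s. First I would invoke $F$-Tur\'an-stability: since $G$ is extremal it certainly satisfies $\cN(H,G)\ge \ex(n,H,F)-o(n^{|V(H)|})$, so $G$ can be obtained from $T(n,k)$ by adding and removing $o(n^2)$ edges. In particular there is a balanced partition $V(G)=V_1\cup\dots\cup V_k$ with $|V_i|=(1+o(1))n/k$ such that all but $o(n^2)$ pairs between distinct parts are edges of $G$ and all but $o(n^2)$ pairs inside parts are non-edges. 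A standard cleaning step then shows that all but $o(n)$ vertices of $G$ are "typical": a typical vertex $w\in V_i$ is non-adjacent to all but $o(n)$ vertices of the other parts and adjacent to at most $o(n)$ vertices of its own part. For a typical vertex one computes $d_G(H,w)=(1+o(1))d_{T(n,k)}(H,v)$, because the copies of $H$ through $w$ are, up to lower-order error, exactly the copies through $w$ in the complete $k$-partite graph on the same partition, and those asymptotically agree with $d_{T(n,k)}(H,v)$ by the observation preceding the statement.

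Next I would use the extremality of $G$ to handle the (at most $o(n)$) atypical vertices and to close the gap for $u$. Suppose $u$ has $d_G(H,u)\le (1-\varepsilon)d_{T(n,k)}(H,v)$. Form $G'$ by deleting $u$ and cloning a typical vertex $w$ (that is, add a new vertex $w'$ adjacent to exactly the neighbours of $w$, and then, if needed, re-typify by a further $o(n)$ edge modifications so that $G'$ is again $F$-free with the same partition structure). Cloning a vertex of a $k$-partite-like $F$-free graph keeps it $F$-free provided $F$ has a color-critical vertex — and it does, since $\chi(F)=k+1>\chi(H)$ forces $F$ to contain at least one color-critical vertex (indeed, by Gerbner's characterization cited in the excerpt, $F$-Tur\'an-good graphs exist, which already requires a color-critical vertex; here we only need the elementary direction). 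Then $\cN(H,G')-\cN(H,G)\ge d_{G'}(H,w')-d_G(H,u)-o(n^{|V(H)|})$. Since $w'$ is a clone of the typical vertex $w$, we get $d_{G'}(H,w')\ge (1-o(1))d_{T(n,k)}(H,v)$, while $d_G(H,u)\le(1-\varepsilon)d_{T(n,k)}(H,v)$, so the difference is at least $(\varepsilon-o(1))d_{T(n,k)}(H,v)=\Omega(n^{|V(H)|-1})$. This strictly increases the $H$-count, contradicting the extremality of $G$. Hence no such $u$ exists and $d_G(H,u)\ge(1-o(1))d_{T(n,k)}(H,v)$ for every $u$.

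The step I expect to be the main obstacle is making the cloning/swap argument rigorous while staying inside the class of $F$-free graphs. The delicacy is that $G$ is not exactly complete $k$-partite — it has $o(n^2)$ errant edges — so naively cloning $w$ might create a copy of $F$ using those errant edges, or the copies of $H$ through $w'$ might be fewer than expected because $w$'s neighbourhood is slightly irregular. I would deal with this by a two-stage symmetrization: first pass to an auxiliary $F$-free graph $\widehat G$ obtained from $G$ by deleting all edges incident to the $o(n)$ atypical vertices and inside the parts, and adding all missing edges between the parts except those that would create a copy of $F$ through a color-critical vertex; one checks $\cN(H,\widehat G)\ge \cN(H,G)-o(n^{|V(H)|})$ using that typical vertices dominate the count, and $\widehat G$ is genuinely close to $T(n,k)$ with clean neighbourhoods; then perform the clone inside $\widehat G$, where the $F$-freeness of the clone is immediate because $w$'s neighbourhood lies (up to $o(n)$ vertices) in a union of $k-1$ of the parts, so a color-critical vertex of $F$ can be accommodated. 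A secondary technical point is the precise quantitative statement $d_{T(n,k)}(H,u)=(1+o(1))d_{T(n,k)}(H,v)$, which follows because the number of copies of $H$ through a fixed vertex of a fixed part of $T(n,k)$ is a polynomial in the part sizes of degree $|V(H)|-1$ whose leading term does not depend on which part the vertex is in; with parts of size $n/k+O(1)$ this gives the desired ratio.
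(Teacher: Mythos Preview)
Your overall strategy---stability gives a Tur\'an partition, typical vertices have the right $H$-degree, and one swaps $u$ for (a clone of) a typical vertex to contradict extremality---is the right shape, but the step where you preserve $F$-freeness has a real gap.

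First, the assertion that $\chi(F)=k+1$ forces $F$ to have a color-critical vertex is false: take $F=K_{k+1}\cup K_{k+1}$. Your appeal to ``Gerbner's characterization'' does not apply here, since the hypothesis of the theorem is only that $H$ is $F$-Tur\'an-\emph{stable}, not $F$-Tur\'an-good. Second, even when $F$ does have a color-critical vertex, cloning can create $F$: whenever $F$ contains two non-adjacent twins it can appear across the pair $\{w,w'\}$. For instance $F=K_4$ minus an edge has $\chi(F)=3$, has color-critical vertices, and yet cloning a vertex of a triangle in an $F$-free graph produces a copy of $F$. So the sentence ``cloning a vertex of a $k$-partite-like $F$-free graph keeps it $F$-free provided $F$ has a color-critical vertex'' is not correct. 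Your two-stage repair (pass to a cleaned $\widehat G$, clone there) does not rescue the argument either: cleaning may cost $o(n^{|V(H)|})$ copies of $H$, which can swamp the $\Omega(n^{|V(H)|-1})$ gain from the clone, so you no longer get $\cN(H,\widehat G')>\cN(H,G)$.

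The paper handles $F$-freeness without any structural assumption on $F$ by a small but decisive twist of Zykov symmetrization: instead of one template vertex $w$, it takes a set $S\subset V_1$ of $|V(F)|$ vertices chosen so that $\sum_{s\in S} f(s)=o(n^{|V(H)|-1})$ (where $f(s)$ counts copies of $H$ through $s$ lost when passing from $G$ to $T(n,k)$), then deletes all edges at $u$ and reconnects $u$ to the \emph{common} neighbourhood of $S$. Any copy of $F$ through $u$ in the new graph uses at most $|V(F)|-1$ vertices besides $u$, so some $s\in S$ is missing; replacing $u$ by $s$ yields a copy of $F$ already in $G$, a contradiction. The gain is then $\ge d_{T(n,k)}(H,v)-o(n^{|V(H)|-1})-d_G(H,u)$, giving the claimed bound directly in $G$ with no cleaning step. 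This pigeonhole use of $|S|=|V(F)|$ template vertices is the missing idea in your argument.
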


Note that for the ordinary Tur\'an case, this is known \cite{sim}. 
Let us now state some of the exact results of generalized Tur\'an problems that follow from our theorems (together with known results).

\begin{thm}\label{resu}

\textbf{(i)} If $\chi(F)=k+1$, $\chi(F)$ has a color-critical edge, $H$ is 
$K_{k+1}$-Tur\'an-stable, then $H$ is weakly $F$-Tur\'an-good. In particular, complete $r$-partite graphs and paths are weakly $F$-Tur\'an-good. If $H$ is also $K_{k+1}$-Tur\'an-good, then $H$ is also $F$-Tur\'an-good.

\textbf{(ii)} If $\chi(F)=k+1$, $\chi(F)$ has a color-critical vertex and $H$ is a complete $k$-partite graph with each part sufficiently large, then $H$ is weakly $F$-Tur\'an-good.

\textbf{(iii)} Let $F$ have chromatic number at least 4 and a color-critical edge. Then $C_5$ is $F$-Tur\'an-good.

\textbf{(iv)}  If $F$ has a color-critical edge, then any union of cliques of order less than $\chi(F)$ is $F$-Tur\'an-good. 

\textbf{(v)} Let us assume that $F$ has a color-critical edge and $\chi(F)=k+1$. The $K_{k+1}$-Tur\'an-good graphs listed in Theorem \ref{turgoo} are also $F$-Tur\'an-good.
\end{thm}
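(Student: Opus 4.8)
\emph{Proof plan.} The plan is to run all five items through one reduction, fed by two kinds of external inputs: a $K_{\chi(F)}$-Tur\'an-stability statement for the relevant $H$, and a description of the optimal $K_{\chi(F)}$-free configuration for $\cN(H,\cdot)$ (a complete $(\chi(F)-1)$-partite graph, possibly unbalanced). Write $k=\chi(F)-1$. The reduction is the chain: $K_{k+1}$-Tur\'an-stability of $H$ implies $H$ is asymptotically $K_{k+1}$-Tur\'an-good (recorded after the definition of Tur\'an-stability); the Gerbner--Palmer inequality $\ex(n,H,F)\le\ex(n,H,K_{k+1})+o(n^{|V(H)|})$ together with the fact that $T(n,k)$ is $F$-free (since $\chi(T(n,k))=k<\chi(F)$, it cannot contain $F$) then gives that $H$ is asymptotically $F$-Tur\'an-good; and Proposition~\ref{kovv} converts this, together with the stability hypothesis, into $F$-Tur\'an-stability of $H$. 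The same chain holds verbatim with ``weakly'' inserted throughout, using the first part of Proposition~\ref{kovv}. Having reached $F$-Tur\'an-stability, I would conclude with Theorem~\ref{haszn}, Theorem~\ref{haszn2}, or the criterion of Hei, Hou and Liu \cite{hhl}, according to whether $F$ has a color-critical edge or only a color-critical vertex and to whether the target is the weak or the strong form.

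For \textbf{(i)}: running the strong chain on a $K_{k+1}$-Tur\'an-stable $H$ gives $F$-Tur\'an-stability, and Theorem~\ref{haszn} (which needs only a color-critical edge of $F$) then yields that $H$ is weakly $F$-Tur\'an-good. The named families are obtained by citing the relevant stability results: complete $r$-partite graphs with $r<\chi(F)$ are weakly $K_{k+1}$-Tur\'an-stable by Liu, Pikhurko, Sharifzadeh and Staden \cite{lpss}, and paths are $K_{k+1}$-Tur\'an-stable by Hei, Hou and Liu \cite{hhl} ($K_{k+1}$ has a color-critical edge, and a path has chromatic number at most $2<k+1$); feeding each into the weak chain and Theorem~\ref{haszn} shows both are weakly $F$-Tur\'an-good. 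For the last sentence, if $H$ is additionally $K_{k+1}$-Tur\'an-good then $\cN(H,T(n,k))=\ex(n,H,K_{k+1})\ge\cN(H,T')$ for every complete $k$-partite $T'$, so $T(n,k)$ maximises $\cN(H,\cdot)$ among complete $k$-partite graphs; combined with $F$-Tur\'an-stability, the theorem of \cite{hhl} then gives $F$-Tur\'an-goodness (for instance $K_r$, by Theorem~\ref{maqi} and Zykov \cite{zykov}). Items \textbf{(iii)}, \textbf{(iv)}, \textbf{(v)} are then instances of this last statement: $C_5$ is $K_{k+1}$-Tur\'an-good and $K_{k+1}$-Tur\'an-stable whenever $k+1=\chi(F)\ge4$ by \cite{lm}; a disjoint union of cliques of order at most $k$, and every graph listed in Theorem~\ref{turgoo}, is $K_{k+1}$-Tur\'an-good and $K_{k+1}$-Tur\'an-stable (by Theorem~\ref{turgoo} and the results preceding it); since in each case $F$ has a color-critical edge, (i) applies.

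For \textbf{(ii)}: let $H$ be complete $k$-partite with all parts large. As in (i), $H$ is weakly $K_{k+1}$-Tur\'an-stable, hence weakly $F$-Tur\'an-stable; since for each $n$ some $F$-free graph attains $\ex(n,H,F)$, that graph can be obtained from a complete $k$-partite graph by changing $o(n^2)$ edges, which is the last hypothesis of Theorem~\ref{haszn2}. The remaining work is to verify the embedding hypothesis of Theorem~\ref{haszn2} for $H$: inserting graphs of maximum degree $<r$ into the parts of a complete $k$-partite graph $T_0$ produces no copy of $H$ outside $T_0$. I would argue that in a copy of $H=K_{a_1,\dots,a_k}$ the vertices split into pairwise completely joined color classes $C_1,\dots,C_k$; a vertex of $C_i$ lying in a part $P$ of $T_0$ has only its $<r$ newly added neighbours inside $P$, so $|C_j\cap P|<r$ for all $j\ne i$, and hence a part is met by at most one color class unless all of them meet it in fewer than $r$ vertices. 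A brief count (each color class occupies at most $k$ parts, the small ones contributing fewer than $k(r-1)$ vertices) shows that if every $a_i$ exceeds $k(r-1)$, then each $C_i$ has a part holding at least $r$ of its vertices, these $k$ parts must be distinct, and each contains the corresponding $C_i$ completely; so the copy uses no added edge and lies in $T_0$. Theorem~\ref{haszn2} then gives that $H$ is weakly $F$-Tur\'an-good.

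I expect the only substantive step inside Theorem~\ref{resu} proper to be the embedding verification in (ii) (where the remaining issue is merely to pin down how large ``sufficiently large'' must be, and the count above shows $a_i>k(r-1)$ suffices); everything else there is bookkeeping. The heavier prerequisite is for (iv) and (v): they rely on $K_{k+1}$-Tur\'an-\emph{stability}, not just Tur\'an-goodness, of disjoint unions of cliques and of the graphs in Theorem~\ref{turgoo}, which is what the preceding sections establish, and once that is in place the deductions above are routine.
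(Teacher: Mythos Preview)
Your reduction chain for \textbf{(i)}, \textbf{(iii)} and \textbf{(v)} matches the paper's proof essentially verbatim, and your verification of the embedding hypothesis in \textbf{(ii)} is a valid variant of the paper's (you obtain the threshold $a_i>k(r-1)$ by a pigeonhole argument, while the paper derives the threshold $a_i>2r$ via a two-vertex argument; both work).

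The gap is in \textbf{(iv)}. You reduce it to the last sentence of \textbf{(i)}, which needs that a disjoint union of cliques is $K_{k+1}$-Tur\'an-\emph{good}. You cite Theorem~\ref{turgoo} for this, but that theorem does not cover arbitrary unions of cliques: part~(i) there takes $H=K_k$, so (with no extra edges) it only produces graphs of the form $K_k\cup H'$; iterating lets you adjoin copies of $K_k$, but never a $K_r$ with $r<k$. Proposition~\ref{propi} gives only asymptotic goodness and (weak) stability, not exact $K_{k+1}$-Tur\'an-goodness. So the claim that e.g.\ $K_2\cup K_3$ is $K_{k+1}$-Tur\'an-good for $k\ge 4$ is not supplied by any of the results you invoke; it is in fact the $F=K_{k+1}$ case of \textbf{(iv)} itself.

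The paper closes this gap by a direct balancing argument: once weak $F$-Tur\'an-goodness is established (exactly as you do, via Proposition~\ref{propi} and Theorem~\ref{haszn}), it shows that among complete $k$-partite graphs the Tur\'an graph maximises $\cN(H_1,\cdot)$. Given parts $A,B$ with $|A|<|B|-1$, move a vertex from $B$ to $A$; any copy of $H_1$ meets $A\cup B$ in a matching together with isolated vertices, each such intersection extends to the same number of copies of $H_1$ using the other parts, and matchings are $K_3$-Tur\'an-good, so the count does not decrease. Iterating reaches $T(n,k)$. This step is the actual content of \textbf{(iv)} and is missing from your proposal.
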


We remark that \textbf{(iv)} generalizes a theorem of the author \cite{gerb}, who showed that matchings are $F$-Tur\'an-good if $F$ has a color-critical edge.

In the next section, we state the lemmas we need. We also state and prove a proposition that give stability for several generalized Tur\'an problem. We present the proofs of Proposition \ref{kovv} and of Theorems \ref{haszn2}, \ref{struc} and \ref{resu} in Section 3.

\section{Preliminaries}\label{2}

We will use the removal lemma \cite{efr}.

\begin{lemma}[Removal lemma]
If an $n$-vertex graph $G$ contains $o(n^{|V(H)|}$ copies of $H$, then we can delete $o(n^2)$ edges from $G$ to obtain an $H$-free graph.
\end{lemma}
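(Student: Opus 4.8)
The plan is to derive this from the Szemer\'edi regularity lemma together with a counting (embedding) lemma, which is the classical route to the graph removal lemma. Write $h=|V(H)|$, and read the statement with explicit quantifiers: for every $\delta>0$ there is $\eta>0$ such that, for $n$ large enough, any $n$-vertex graph $G$ with fewer than $\eta n^{h}$ copies of $H$ can be made $H$-free by deleting at most $\delta n^{2}$ edges. Fix $\delta>0$.

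First I would apply the regularity lemma with a parameter $\epsilon=\epsilon(\delta,h)$, to be chosen, obtaining an $\epsilon$-regular partition $V(G)=V_1\cup\dots\cup V_k$ with $k\le M(\epsilon)$ and the $V_i$ of nearly equal size. Then I would clean $G$ to a graph $G'$ by deleting: all edges inside the parts $V_i$; all edges between pairs $(V_i,V_j)$ that fail to be $\epsilon$-regular; and all edges between pairs of density less than $2\epsilon$. A routine count bounds the number of deleted edges by roughly $(\,1/k+\epsilon+2\epsilon\,)n^{2}$, which is at most $\delta n^{2}$ once $\epsilon$ (hence $1/k$) is small enough in terms of $\delta$. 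Next, suppose for contradiction that $G'$ still contains a copy of $H$, with its vertices lying in parts $V_{i_1},\dots,V_{i_h}$ (repetitions allowed). Every pair of these parts joined by an edge of $H$ is then $\epsilon$-regular of density at least $2\epsilon$, so the counting lemma supplies at least $c\,n^{h}$ copies of $H$ in $G'\subseteq G$ for some $c=c(\epsilon,h)>0$, provided $\epsilon$ is small enough relative to $h$ and the part sizes $n/k$ are large enough. Taking $\eta<c$ contradicts the hypothesis on $G$; hence $G'$ is $H$-free, and it was obtained from $G$ by deleting at most $\delta n^{2}$ edges, as required.

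The only delicate point is the order of the quantifier chase: one must fix $\delta$ first, then choose $\epsilon$ small enough for the cleaning estimate, which pins down the bound $M(\epsilon)$ on $k$ and hence the counting-lemma constant $c$, and only afterwards choose $\eta=\eta(\delta)<c$; the requirement that $n/k$ be large enough for the counting lemma then holds for all sufficiently large $n$ since $k$ is bounded. Since this is exactly the statement of \cite{efr}, the write-up can alternatively consist of a pointer to that reference, with the sketch above included only for the reader's convenience.
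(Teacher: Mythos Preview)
The paper does not prove this lemma at all; it is stated with a citation to \cite{efr} and used as a black box. Your regularity-plus-counting sketch is the standard proof and is correct, including the careful order of quantifiers you spell out; the only minor point worth noting is that when a copy of $H$ in $G'$ places two (necessarily non-adjacent) vertices in the same part $V_j$, one typically splits $V_j$ into equal pieces to obtain disjoint host sets before invoking the counting lemma, but this is routine and does not affect the argument.
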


We will combine this with a result of Alon and Shikhelman \cite{ALS2016}.

\begin{prop}\label{ash}
$\ex(n,K_k,F)=\Theta(n^k)$ if and only if $\chi(F)>k$.
\end{prop}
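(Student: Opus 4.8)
The plan is to prove the two implications separately, keeping in mind the trivial bound $\ex(n,K_k,F)\le\binom nk=O(n^k)$, which reduces the forward direction to producing a matching lower bound and the reverse to sharpening ``not $\Theta(n^k)$'' all the way to $\ex(n,K_k,F)=o(n^k)$. (I take $k\ge 2$ throughout; for $k=1$ one has $\ex(n,K_1,F)=n$ as soon as $F$ contains an edge, so the statement should be read with that understood.) For the forward direction, suppose $\chi(F)>k$. The Tur\'an graph $T(n,k)$ has chromatic number $k<\chi(F)$ and therefore contains no copy of $F$ (a copy of $F$ would force chromatic number $\ge\chi(F)$), so $T(n,k)$ is $F$-free; picking one vertex from each of its $k$ parts shows $\cN(K_k,T(n,k))\ge\lfloor n/k\rfloor^k=\Omega(n^k)$, and therefore $\ex(n,K_k,F)=\Theta(n^k)$.

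For the converse I would argue the contrapositive: if $\chi(F)\le k$, then $\ex(n,K_k,F)=o(n^k)$. Set $t=|V(F)|$ and let $K_k(t)$ denote the complete $k$-partite graph with each part of size $t$. A proper $k$-colouring of $F$ embeds $F$ into $K_k(t)$, since each colour class has at most $t$ vertices and every edge of $F$ runs between two colour classes. Hence every $F$-free graph is $K_k(t)$-free, and it suffices to prove $\ex(n,K_k,K_k(t))=o(n^k)$.

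To establish this bound, let $G$ be an $n$-vertex $K_k(t)$-free graph and consider the $k$-uniform hypergraph $\mathcal H$ on $V(G)$ whose edges are the vertex sets of the copies of $K_k$ in $G$, so that $|\mathcal H|=\cN(K_k,G)$. If $\cN(K_k,G)\ge\varepsilon n^k$ for a fixed $\varepsilon>0$ and $n$ large, then by the classical theorem of Erd\H os that the complete $k$-partite $k$-uniform hypergraph with all parts of size $t$ has Tur\'an number $o(n^k)$ (for $k=2$ this is the K\H ov\'ari--S\'os--Tur\'an bound), $\mathcal H$ contains pairwise disjoint sets $A_1,\dots,A_k$, each of size $t$, every transversal of which is an edge of $\mathcal H$. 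For any $i\ne j$ and any $a\in A_i$, $b\in A_j$, some such transversal contains $\{a,b\}$ and is the vertex set of a $K_k$ in $G$, so $ab\in E(G)$; hence $G[A_1\cup\dots\cup A_k]$ contains $K_k(t)$, a contradiction. Thus $\cN(K_k,G)=o(n^k)$, which is exactly what is needed.

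I expect the one genuine ingredient to be the Erd\H os box theorem used above (equivalently, a self-contained proof of it); the rest --- the $k$-partite Tur\'an computation, the colouring embedding, and the extraction of $K_k(t)$ from a complete $k$-partite sub-hypergraph of $\mathcal H$ --- is routine. One can avoid quoting the box theorem by instead proving $\ex(n,K_k,K_k(t))=o(n^k)$ by induction on $k$, the base case $k=2$ being K\H ov\'ari--S\'os--Tur\'an and the inductive step passing to the neighbourhoods $G[N(v)]$ (which, for $\Omega(n)$ vertices $v$, carry $\Omega(|N(v)|^{k-1})$ copies of $K_{k-1}$ and hence a large complete $(k-1)$-partite subgraph, to be assembled into a $K_k(t)$); but this amounts to essentially the same counting as the box theorem and is no shorter.
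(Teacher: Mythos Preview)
The paper does not actually prove this proposition: it is stated as a result of Alon and Shikhelman \cite{ALS2016} and simply quoted. Your argument is correct and is essentially the standard one (and the one in \cite{ALS2016}): the lower bound comes from the Tur\'an graph $T(n,k)$, and for the upper bound one reduces to forbidding $K_k(t)$ and invokes Erd\H{o}s's hypergraph extremal result (the ``box theorem'') to force a complete $k$-partite $k$-graph in the clique hypergraph, which then yields a $K_k(t)$ in $G$. So there is nothing to compare against in the paper itself; your write-up would serve as a perfectly good self-contained proof of the cited proposition.
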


We will also use the following lemma.

\begin{lemma}\label{neww}
Let us assume that $\chi(H)<\chi(F)$ and $H$ is weakly $F$-Turán-stable, i.e. $\ex(n,H,F)=\cN(H,T)-o(n^{|V(H)|})$ for some complete multipartite $n$-vertex graph $T$. Then every part of $T$ has order $\Omega(n)$.
\end{lemma}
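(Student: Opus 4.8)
The plan is to argue by contradiction: suppose some part of $T$ has order $o(n)$. Then $T$ — and hence, up to $o(n^{|V(H)|})$ copies of $H$, the extremal configuration — behaves essentially like a complete multipartite graph on fewer than the "right" number of parts, and I want to show this forces $\cN(H,T)$ to be of strictly smaller order than $\ex(n,H,F)$, contradicting weak $F$-Turán-stability. First I would record two facts. Since $\chi(H) < \chi(F) =: k+1$, we have $\chi(H) \le k$, so $H$ embeds in $T(n,k)$ and $\cN(H,T(n,k)) = \Theta(n^{|V(H)|})$; combined with the hypothesis this gives $\ex(n,H,F) = \Theta(n^{|V(H)|})$, and in particular $\cN(H,T) = \Theta(n^{|V(H)|})$. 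Note also that $T$ is $F$-free (being complete multipartite with at most $k = \chi(F)-1$ parts — or if $T$ has more parts, it already fails to be $F$-free only if it contains $F$, but a complete multipartite graph with $\ge \chi(F)$ parts does contain $F$; so $T$ has at most $k$ parts, hence $\cN(H,T) \le \cN(H,K_{t,\dots}) $ on at most $k$ parts).

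The key step is a counting estimate. Let $T$ have parts $V_1,\dots,V_t$ with $t \le k$, of sizes $n_1 \ge \dots \ge n_t$, and suppose $n_t = o(n)$. I would bound $\cN(H,T)$ by classifying copies of $H$ in $T$ according to which parts their vertices hit. A copy of $H$ that uses no vertex of $V_t$ lies inside the complete $(t-1)$-partite graph $T' = T - V_t$, and $\cN(H,T') \le \cN(H, T(n,k-1)) = O(n^{|V(H)|})$ with the same leading power but — crucially — this is $(1+o(1))\cN(H,T(n,k))$ times a constant factor strictly less than $1$ when $H$ has more than $k-1$ "effective parts"; more robustly, since $H$ has an independent set structure forcing it into $\le \chi(H) \le k$ parts, I just need that $\cN(H,T')$ is by a bounded factor smaller. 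A copy of $H$ that uses at least one vertex of $V_t$ contributes at most $|V(H)| \cdot n_t \cdot n^{|V(H)|-1} = o(n^{|V(H)|})$ such copies in total. Hence $\cN(H,T) \le \cN(H,T') + o(n^{|V(H)|})$. Iterating, if several parts are $o(n)$ we may delete them all at a total cost of $o(n^{|V(H)|})$, reducing to a complete multipartite graph all of whose parts have order $\Omega(n)$ but with strictly fewer than $t$ parts.

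The main obstacle — and the place I would need to be careful — is ruling out that a complete multipartite graph on fewer than $k$ large parts could still match $\ex(n,H,F)$. This is exactly where $\chi(H) < \chi(F)$ (rather than $\le$) is needed: after discarding the small parts we are left with $T''$ complete $s$-partite, all parts $\Omega(n)$, and $\cN(H,T'') = \ex(n,H,F) - o(n^{|V(H)|}) = \Theta(n^{|V(H)|})$. If $s < \chi(H)$ then $T''$ contains no copy of $H$ at all, immediate contradiction. If $\chi(H) \le s \le k-1$, I compare with $T(n,k)$: one checks by a standard convexity/blow-up argument that distributing $n$ vertices into $k$ balanced parts yields $\cN(H,T(n,k)) > (1+o(1))^{-1}\cN(H,T'')$ is false in general — so instead I invoke the hypothesis directly: weak stability says $T$ (the original) is within $o(n^{|V(H)|})$ of the optimum, and I have just shown $\cN(H,T) = \cN(H,T'') + o(n^{|V(H)|})$ with $T''$ having $\le k-1$ parts. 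But any complete $(k-1)$-partite $n$-vertex graph is $K_k$-free, so $\cN(H,T'') \le \ex(n,H,K_k) \le \ex(n,H,K_{k+1}) - \Omega(n^{|V(H)|})$ whenever $\chi(H) \le k-1 < k$, because $\ex(n,H,K_k)$ and $\ex(n,H,K_{k+1})$ have the same order $\Theta(n^{|V(H)|})$ (by Proposition \ref{ash}, using $\chi(H) \le k-1$, so both are $\Theta(n^{|V(H)|})$) but different leading constants — here one may cite the known strict monotonicity $\cN(H, T(n,k-1)) = (1-\Omega(1))\cN(H,T(n,k))$ for $H$ with $\chi(H) \le k-1$, which follows from the blow-up density of $H$ in the balanced $(k-1)$- versus $k$-partite graph being strictly smaller. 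Since $\ex(n,H,F) \ge \cN(H,T(n,k)) \ge \cN(H, \text{balanced } k\text{-partite})$ up to lower order (as $T(n,k)$ is $F$-free because $\chi(F) = k+1$), we get $\cN(H,T) \le \ex(n,H,F) - \Omega(n^{|V(H)|})$, contradicting the defining inequality of weak stability. Therefore no part of $T$ can have order $o(n)$; equivalently every part has order $\Omega(n)$. $\blacksquare$
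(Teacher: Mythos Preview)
Your plan has the right shape, but there is a genuine gap at precisely the step you yourself flag as ``the main obstacle''. After stripping the small parts you are left with a complete $s$-partite graph $T''$ with $s\le k-1$ and all parts of order $\Omega(n)$, and you need
\[
\cN(H,T'')\le \ex(n,H,F)-\Omega\bigl(n^{|V(H)|}\bigr).
\]
You attempt this via $\cN(H,T'')\le \ex(n,H,K_k)\le \ex(n,H,K_{k+1})-\Omega(n^{|V(H)|})$ and via ``known strict monotonicity $\cN(H,T(n,k-1))=(1-\Omega(1))\cN(H,T(n,k))$''. Neither is justified in the generality you need. The gap $\ex(n,H,K_k)\le \ex(n,H,K_{k+1})-\Omega(n^{|V(H)|})$ is not a standard fact for arbitrary $H$ (the extremal graphs need not be Tur\'an graphs, or even multipartite), and the balanced-Tur\'an comparison says nothing about your unbalanced $T''$. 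What you actually need is that every complete $(k-1)$-partite $n$-vertex graph can be beaten by some $F$-free graph by $\Omega(n^{|V(H)|})$ copies of $H$, and you have not shown this.

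The paper supplies exactly this missing ingredient, but applies it directly to $T$ without your detour through $T''$. It takes a subset $U$ of order $\Theta(n)$ from the largest part $A_2$ and moves it into the smallest part $A_1$, producing another complete $k$-partite (hence $F$-free) graph $T'$. The only copies of $H$ destroyed are those using an edge between $U$ and the old $A_1$; since $|A_1|=o(n)$ there are $o(n^{|V(H)|})$ of these. On the other hand, embedding one colour class of $H$ into $U$, another into $A_2\setminus U$, and the remaining colour classes into other large parts yields $\Theta(n^{|V(H)|})$ brand-new copies, so $\cN(H,T')>\cN(H,T)+\Theta(n^{|V(H)|})>\ex(n,H,F)$, a contradiction. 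This rebalancing step (split a large part, gain $\Theta(n^{|V(H)|})$ copies) is precisely what your argument is missing; once you have it, the reduction to $T''$ and the appeals to $\ex(n,H,K_k)$ become unnecessary.
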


\begin{proof}
Assume the statement does not hold and let $A_1,\dots,A_{\chi(F)-1}$ be the parts of $T$. Let $A_1$ be the smallest part and $A_2$ be the largest part. Then $|A_2|\ge n/(\chi(F)-1)$. Let $U$ be a subset of $A_2$ of order $n/2(\chi(F)-1)$ and move $U$ from $A_2$ to $A_1$ to obtain another $F$-free graph $T'$. This way we remove only those copies of $H$ that contain an edge between $U$ and $A_2$, thus $o(n^{|V(H)|})$ copies. Let us take a proper $\chi(H)$-coloring of $H$, and embed a color class into $U$, another color class into $A_2\setminus U$, and the other color classes to $A_3,\dots, A_{\chi(H)}$. This way we obtain $\Theta(n^{|V(H)|})$ copies of $H$ that are in $T'$ and not in $T$, thus $\cN(H,T')>\cN(H,T)+\Theta(n^{|V(H)|}>\ex(n,H,F)$, a contradiction completing the proof.
\end{proof}

There are several results \cite{gypl,gp2,gerbner2} that take $F$-Tur\'an-good graphs as building blocks and form new $F$-Tur\'an-good graphs.

Let $\chi(H),\chi(H')\le k$. Let $H_1$ be the vertex-disjoint union of $H$ and $H'$. Assume first that $H$ contains a copy $X$ of $K_{k}$,
then let $H_2$ be the graph obtained by taking $H_1$ and a clique $Y$ in $H'$, and connecting vertices of $Y$ and $X$ arbitrarily. 
Assume now that $H$ has a unique $k$-coloring.
Let $H_3$ obtained by taking $H$ and a $K_k$ with with vertices $v_1,\dots,v_k$, and adding additional edges such that for every $i\le k$, there is a copy of $K_k$ in $H_3$ containing $v_i$, but not containing any $v_j$ for $j>i$ and assume that $\chi(H_3)=k$.

\begin{thm}\label{turgoo}
\textbf{(i)} (Gerbner, Palmer \cite{gp3}) Let $H'$ be a $K_{k+1}$-Tur\'an-good graph and $H=K_k$.
	Then $H_2$ is $K_{k+1}$-Tur\'an-good.
	
	\textbf{(ii)} (Gerbner \cite{gerbner2})  Let $H$ be a $K_{k+1}$-Tur\'an-good graph. Then $H_3$ is $K_{k+1}$-Tur\'an-good.
\end{thm}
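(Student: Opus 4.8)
In both parts the lower bound is immediate: $T(n,k)$ is $K_{k+1}$-free, so $\ex(n,H_2,K_{k+1})\ge\cN(H_2,T(n,k))$, and likewise for $H_3$. For the matching upper bound I would proceed in two stages — first locate the rough shape of an extremal graph, then count copies at the vertex level — using the clique $X\cong K_k$ built into $H_2$ (respectively the prescribed system of $k$-cliques on $v_1,\dots,v_k$ built into $H_3$) as the rigidifying device throughout.

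\emph{Stage 1: reduction to an almost $k$-partite host.} Let $G$ be an $n$-vertex $K_{k+1}$-free graph with $\cN(H_2,G)=\ex(n,H_2,K_{k+1})$; since $\chi(H_2)=k$ this is $\Theta(n^{|V(H_2)|})$, and as every copy of $H_2$ carries a copy of $X\cong K_k$ while at most $O(n^{|V(H_2)|-k})$ copies of $H_2$ share one $k$-clique, $G$ has $\Theta(n^k)$ copies of $K_k$. The goal is to upgrade this to $\cN(K_k,G)=\ex(n,K_k,K_{k+1})-o(n^k)$, after which Theorem~\ref{maqi} puts $G$ within $o(n^2)$ edges of $T(n,k)$ with classes of size $\Theta(n)$ (Lemma~\ref{neww}). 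The naive bound $\cN(H_2,G)\le\cN(H',G)\,\cN(K_k,G)$ is too lossy for this, so one must instead exploit that in $T(n,k)$ the copy of $K_k$ and the copy of $H'$ inside a copy of $H_2$ are forced to share the same class structure; making this quantitative — a supersaturation statement saying that losing a constant fraction of the copies of $K_k$ loses a constant fraction of the copies of $H_2$ — is itself a nontrivial part of the argument.

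\emph{Stage 2: local count.} Write $G$ as a complete $k$-partite graph $T$ with classes of size $\Theta(n)$, plus a set $E^{+}$ of within-class edges, minus a set $E^{-}$ of cross edges, $|E^{+}|+|E^{-}|=o(n^2)$. The copies of $H_2$ inside $T$ number $\cN(H_2,T)\le\cN(H_2,T(n,k))$ by a short optimization over the class sizes (using that $H'$ and $K_k$ are $K_{k+1}$-Tur\'an-good, so the balanced partition wins). The point is that the remaining copies, which use an edge of $E^{+}\cup E^{-}$, do not overturn this. Here the $K_k$ inside $H_2$ enters: in a $K_{k+1}$-free graph a $K_k$ has empty common neighbourhood, so all but $o(n^k)$ copies of $K_k$ are rainbow (one vertex per class), and the arbitrary $X$--$Y$ edges of $H_2$ then only prescribe which classes the $Y$-vertices must avoid — a restriction already present in $T(n,k)$. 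A within-class edge of $E^{+}$ creates only $O(n^{|V(H_2)|-2})$ new copies of $H_2$, and these must be weighed against the copies destroyed by $E^{-}$ and the imbalance loss, using that $K_{k+1}$-freeness forces enough missing cross edges around each within-class edge that the net change is $\le 0$. For part (ii) the same scheme works after one extra observation: in a copy of $H_3$ inside the almost $k$-partite $G$ the vertices $v_1,\dots,v_k$ must occupy $k$ distinct classes — peel them off in the order $v_k,\dots,v_1$, using that $v_i$ lies in a $K_k$ of $H_3$ avoiding $v_{i+1},\dots,v_k$ and that $K_k$'s of $G$ are essentially rainbow — so that, together with the uniqueness of the $k$-colouring of $H$, a copy of $H_3$ corresponds to a rigid assignment of parts to classes; then $\cN(H_3,T)\le\cN(H_3,T(n,k))$ and the $K_{k+1}$-Tur\'an-goodness of $H$ transfers to $H_3$.

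\emph{Main obstacle.} I expect the crux to be the second stage — ruling out that $o(n^2)$ non-$k$-partite edges, together with whatever class imbalance they allow, produce more copies of $H_2$ than $T(n,k)$. This is exactly the point where it matters that $H_2$ is organised around a $K_k$ rather than around, say, a star (recall $K_{1,3}$ is only \emph{weakly} $K_3$-Tur\'an-good), and it requires careful bookkeeping of where a non-$k$-partite edge can sit inside a copy of $H_2$ and how many extra or missing cross edges its presence forces. The supersaturation step in Stage 1 is a secondary difficulty of the same flavour.
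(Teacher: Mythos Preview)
Your plan is quite different from the paper's approach. The paper (citing \cite{gp3} and \cite{gerbner2}, and sketching their arguments in the proof of Proposition~\ref{propi}) handles both parts by a \emph{direct} counting argument that uses the $K_{k+1}$-freeness of $G$ pointwise on individual embedded cliques, with no stability, no removal lemma, and no $o(n^2)$-edge bookkeeping whatsoever. The stability-then-local-analysis scheme you outline is rather the machinery the paper builds elsewhere (Proposition~\ref{kovv}, Theorem~\ref{haszn}) to transfer such results to \emph{general} $F$ with a color-critical edge, precisely because for those $F$ no direct argument is available.

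Concretely, for (i) one counts copies of $H_2$ in a $K_{k+1}$-free $G$ by first choosing the image of $X=K_k$, then a vertex-disjoint image of $H'$ (and hence of the clique $Y\subseteq H'$), and finally checking whether the prescribed $X$--$Y$ edges are present. The only lemma needed (from \cite{gp2}) is that for an embedded $K_k$ and a disjoint embedded clique $Y$ in a $K_{k+1}$-free graph, the bipartite graph of \emph{non}-edges between them contains a matching saturating $Y$: if some $S\subseteq Y$ had fewer than $|S|$ non-neighbours in $X$, then $S$ together with its common $X$-neighbours would form a clique of size $>k$. In $T(n,k)$ exactly such a matching is missing, so the $X$--$Y$ pattern realised in $G$ is always a sub-pattern of the one in $T(n,k)$; together with $\cN(K_k,G)\le\cN(K_k,T(n,k))$ and $\cN(H',G)\le\cN(H',T(n,k))$ this gives the exact bound. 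For (ii) one embeds $H$ first and then places $v_1,\dots,v_k$ in increasing order: at step $i$ the defining property of $H_3$ supplies a $K_k$ through $v_i$ whose other $k-1$ vertices are already embedded, and $K_{k+1}$-freeness of $G$ leaves at most one choice for the image of $v_i$; in $T(n,k)$ there is always an admissible placement, so again the count in $G$ is dominated term-by-term.

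The gap in your plan is exactly where you locate it: Stage~2 is left as ``careful bookkeeping'' with no indication of how to carry it out, and there is no obvious way to finish it short of rediscovering the direct clique arguments above. Your Stage~1 is also more roundabout than needed --- Proposition~\ref{propi} in the paper already yields $K_{k+1}$-Tur\'an-stability of $H_2$ and $H_3$ straight from that of $K_k$ (Theorem~\ref{maqi}) via the same product decomposition, with no separate supersaturation step.
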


Here we show how the same arguments as used in \cite{gp3} and \cite{gerbner2} give similar statements for (weakly) asymptotically Tur\'an-good and (weakly) Tur\'an-stable graphs. The first statement of the next proposition is complicated because we need to emphasize that the nearly extremal complete multipartite graph is the same graph for both $H$ and $H'$.

\begin{prop}\label{propi}
Let $\chi(H),\chi(H')\le k$ and $\chi(F)>k$. Let $\ex(n,H,F)=(1+o(1))\cN(H,T)$ and $\ex(n,H',F)=(1+o(1))\cN(H',T)$ for some complete $k$-partite $n$-vertex graph $T$. Then $\ex(n,H_1,F)=(1+o(1))\cN(H_1,T)$.
Let $\ex(n,H,K_{k+1})=(1+o(1))\cN(H,T)$ and $\ex(n,H',K_{k+1})=(1+o(1))\cN(H',T)$ for some complete $k$-partite $n$-vertex graph $T$. Then $\ex(n,H_2,K_{k+1})=(1+o(1))\cN(H_2,T)$.
If $H$ is asymptotically $K_{k+1}$-Tur\'an-good, then $H_3$ is also  asymptotically $K_{k+1}$-Tur\'an-good. Moreover, $H_3$ is $K_{k+1}$-Tur\'an-stable.

Furthermore, if $H$ or $H'$ are weakly $F$-Tur\'an-stable, then $H_1$ is also weakly $F$-Tur\'an-stable. Similarly, if $H$ or $H'$ are $K_{k+1}$-Tur\'an-stable, then $H_2$ is also weakly $K_{k+1}$-Tur\'an-stable.
\end{prop}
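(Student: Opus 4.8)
The plan is to dispatch the four assertions in turn: the two statements about $H_1$ follow from a short injective-homomorphism count, and the two statements about $H_2$ and $H_3$ from re-running the proofs of Theorem~\ref{turgoo}(i)--(ii) given in~\cite{gp3} and~\cite{gerbner2} while keeping track of error terms. Write $N^*(L,G)=|\Aut(L)|\cdot\cN(L,G)$ for the number of injective homomorphisms $L\to G$. Two preliminary observations: since $\chi(F)>k$ the Tur\'an graph $T(n,k)$ is $F$-free, so $\cN(H,T(n,k))\le\ex(n,H,F)=(1+o(1))\cN(H,T)$; as $\chi(H)\le k$ one has $\cN(H,T(n,k))=\Theta(n^{|V(H)|})$, hence $\cN(H,T)=\Theta(n^{|V(H)|})$, and likewise $\cN(H',T)=\Theta(n^{|V(H')|})$. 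Therefore $N^*(H,T)N^*(H',T)=\Theta(n^{|V(H_1)|})$, and since only $O(n^{|V(H_1)|-1})$ pairs of injective homomorphisms $H\to T$, $H'\to T$ have intersecting images, $N^*(H_1,T)=(1-o(1))N^*(H,T)N^*(H',T)$. Now for any $F$-free graph $G$, each injective homomorphism $H_1\to G$ restricts to a unique pair of injective homomorphisms of $H$ and of $H'$, so $N^*(H_1,G)\le N^*(H,G)N^*(H',G)\le(1+o(1))N^*(H,T)N^*(H',T)=(1+o(1))N^*(H_1,T)$, where the middle step uses $\cN(H,G)\le\ex(n,H,F)=(1+o(1))\cN(H,T)$ and its analogue for $H'$. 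Dividing by $|\Aut(H_1)|$ and using the trivial bound $\ex(n,H_1,F)\ge\cN(H_1,T)$ gives $\ex(n,H_1,F)=(1+o(1))\cN(H_1,T)$.

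For the stability claim about $H_1$, suppose $H$ is weakly $F$-Tur\'an-stable (the case of $H'$ is symmetric) and $G$ is $F$-free with $\cN(H_1,G)\ge\ex(n,H_1,F)-o(n^{|V(H_1)|})=(1-o(1))\cN(H_1,T)$. Then $N^*(H_1,G)\ge(1-o(1))N^*(H,T)N^*(H',T)$, whereas $N^*(H_1,G)\le N^*(H,G)\cdot(1+o(1))N^*(H',T)$; comparing these forces $N^*(H,G)\ge(1-o(1))N^*(H,T)$, i.e.\ $\cN(H,G)\ge\ex(n,H,F)-o(n^{|V(H)|})$. Weak $F$-Tur\'an-stability of $H$ then places $G$ within $o(n^2)$ edge edits of a complete $k$-partite graph.

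For $H_2$ and $H_3$ we have $\chi(F)=k+1$, and the hypothesis pins $T$ down further: with $H=K_k$, Zykov's theorem~\cite{zykov} gives $\cN(K_k,T(n,k))=\ex(n,K_k,K_{k+1})=(1+o(1))\cN(K_k,T)$, and since the product of the part sizes of an $n$-vertex complete $k$-partite graph is asymptotically maximized only by the balanced partition, every part of $T$ has order $(1+o(1))n/k$. I would then follow~\cite{gp3}: a copy of $H_2$ in a $K_{k+1}$-free graph $G$ decomposes into the $K_k$ on $X$, the copy of $H'$ carrying the clique $Y$, and the prescribed edges between $X$ and $Y$; because $G$ is $K_{k+1}$-free each vertex of $Y$ has at most $k-1$ neighbours in the copy of $X$, and the number of copies of $K_k$ that can be attached to a fixed copy of $Y$ with the correct adjacency pattern is governed by common neighbourhoods of cliques of size at most $k-1$, exactly the quantities whose relevant sums are asymptotically optimized by $T(n,k)$. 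Carrying the $(1+o(1))$ factors through yields $\cN(H_2,G)\le(1+o(1))\cN(H_2,T)$, and when this is asymptotically tight the tightness must occur in the step controlling $\cN(K_k,G)$ or $\cN(H',G)$; then $K_k$-Tur\'an-stability (Theorem~\ref{maqi}) or $K_{k+1}$-Tur\'an-stability of $H'$ places $G$ within $o(n^2)$ of a complete $k$-partite graph, which is weak stability. For $H_3$ I would follow~\cite{gerbner2}: the unique $k$-coloring of $H$ propagates to $H_3$ -- each $v_i$ lies in a copy of $K_k$ disjoint from $\{v_{i+1},\dots,v_k\}$, so the color of $v_i$ is forced once the colors on $V(H)\cup\{v_1,\dots,v_{i-1}\}$ are fixed, whence $H_3$ also has a unique $k$-coloring -- and exploiting this rigidity together with the hypothesis $\cN(H,G)\le\ex(n,H,K_{k+1})=(1+o(1))\cN(H,T(n,k))$, that argument bounds $\cN(H_3,G)\le(1+o(1))\cN(H_3,T(n,k))$ for every $K_{k+1}$-free $G$ and shows that near-equality forces $G$ to be $o(n^2)$-close to $T(n,k)$ itself, i.e.\ $H_3$ is asymptotically $K_{k+1}$-Tur\'an-good and (fully, not merely weakly) $K_{k+1}$-Tur\'an-stable.

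The $H_1$ part is routine; the main obstacle is the quantitative re-run of the~\cite{gp3} and~\cite{gerbner2} arguments. The delicate points are (a) isolating, inside those arguments, the one inequality whose asymptotic tightness already forces the structure of $G$ -- for $H_2$ this is the bound controlling $\cN(K_k,G)$ or $\cN(H',G)$, which is precisely why only \emph{weak} stability can be claimed, since the extremal graph for $\cN(H',\cdot)$ may be an unbalanced complete $k$-partite graph -- and (b) checking that replacing exact counts by counts accurate to within $o(n^{|V|})$ leaves the common-neighbourhood estimates obtained from $K_{k+1}$-freeness intact.
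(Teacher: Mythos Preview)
Your approach is essentially the paper's: both count $H_1$ by multiplying $H$- and $H'$-counts (your injective-homomorphism bookkeeping via $N^*$ is cleaner than the paper's informal version), and both handle $H_2$ and $H_3$ by re-running the arguments of \cite{gp3} and \cite{gerbner2} with $(1{+}o(1))$ slack, deducing stability of $H_i$ from stability of one of its factors (in particular, for $H_3$ the paper, like you, leans on the $K_k$ piece and Theorem~\ref{maqi}). One small caution: in the definition preceding the proposition $H$ is only assumed to \emph{contain} a copy of $K_k$, not to equal $K_k$, so your aside that ``with $H=K_k$'' forces $T$ to be balanced is not justified in general---but since neither you nor the paper actually uses the balance of $T$ in the $H_2$ argument, this does not affect the proof.
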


We remark that the stability of $H_i$ is implied if we have stability for any of $H$ and $H'$.  In the case of $H_3$, $H'$ is replaced by $K_k$, for which we have stability by Theorem \ref{maqi}, thus we have stability automatically for $H_3$.

\begin{proof}
Let $G$ be an $n$-vertex $K_{k+1}$-free graph.
We count the copies of $H_i$ by picking $H$, a vertex-disjoint $H'$, and then the additional edges. Clearly picking $H$ and then picking $H'$ can be done $(1+o(1))\cN(H,T(n,k))$ and $(1+o(1))\cN(H',T(n,k))$ ways, thus it is asymptotically maximized by the Tur\'an graph. This completes the proof for $H_1$. 

For $H_2$, we consider the bipartite graph $G'$ with $X$ and $Y$ embedded into $G'$ as the two parts, with the edges of $G$ between them. It was shown in \cite{gp2} that a matching covering $Y$ is missing from $G'$. On the other hand, in the Tur\'an graph only such a matching is missing, thus the Tur\'an graph also maximizes $G'$, meaning that the bipartite graph obtained this way in $G$ is a subgraph of the bipartite graph obtained this way in $T(n,k)$. This implies that the number of ways to pick the additional edges is maximized in $T(n,k)$ also.

For $H_3$, we pick a copy $K$ of $K_k$ and $H$. Then we need to finish the embedding of $H_3$ into $G$ by adding the additional edges. We claim that there is at most one way to do that. Indeed, we go through the vertices $v_i$ in $H$ in increasing order. When we pick $v_i$, there is a copy of $K_{k}$ in $H_3$ containing $v_i$ such that the other vertices are already embedded into $G$. Since $G$ is $K_{k+1}$-free, we have that those vertices have at most one common neighbor in $G$, thus there is at most one way to choose $v_i$. On the other hand, in the Tur\'an graph there is always a way to finish the embbeding (see the proof of Proposition 1.3 in \cite{gerbner2}), thus the number of ways to pick the additional edges is maximized in $T(n,k)$.

Finally, if $G$ cannot be obtained from $T$ (or $T(n,k)$) by adding and removing $o(n^2)$ edges, then $\cN(H,G)<\alpha\cN(H,T)$ (or $\cN(H,G)<\alpha\cN(H,T(n,k))$) for some $\alpha<1$ (or the same holds with $H'$ in place of $H$). thus the above calculations give the bounds $\cN(H_1,G)<(1+o(1))\alpha\cN(H_1,T)$ and $\cN(H_2,G)<(1+o(1))\alpha\cN(H_2,T(n,k))$, a contradiction. 
\end{proof}

We remark that we can have $H=H'$. In that case we obtain that two (and by induction any number of) copies of $H$ have the same asymptotically extremal complete multipartite graph $T$, and if we have stability for $H$, then we have it for the multiple copies. 

\section{Proofs}

Let us start with the proof of Proposition \ref{kovv} that we restate here for convenience.

\begin{prop*}
Let $\chi(F)=k+1$ and $H$ be weakly asymptotically $F$-Tur\'an-good and weakly $K_{k+1}$-Tur\'an-stable. Then $H$ is weakly $F$-Tur\'an-stable. Furthermore, if $H$ is asymptotically $F$-Tur\'an-good and $K_{k+1}$-Tur\'an-stable, then $H$ is $F$-Tur\'an-stable.
\end{prop*}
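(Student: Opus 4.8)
The plan is to reduce the $F$-free setting to the $K_{k+1}$-free setting by cleaning up $G$ with the removal lemma, and then invoke the assumed $K_{k+1}$-Tur\'an-stability. Write $k=\chi(F)-1$, so that $\chi(H)\le k$. Since $H$ is weakly $K_{k+1}$-Tur\'an-stable it is in particular weakly asymptotically $K_{k+1}$-Tur\'an-good, so there is a complete $k$-partite $n$-vertex graph $T_0$ with $\ex(n,H,K_{k+1})=(1+o(1))\cN(H,T_0)$; since $\cN(H,T_0)=O(n^{|V(H)|})$ this can be rewritten as $\cN(H,T_0)=\ex(n,H,K_{k+1})-o(n^{|V(H)|})$. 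Now $T_0$ has chromatic number $k<\chi(F)$, so $T_0$ is $F$-free, and hence
\[
\ex(n,H,F)\ \ge\ \cN(H,T_0)\ =\ \ex(n,H,K_{k+1})-o(n^{|V(H)|}).
\]
(The Gerbner--Palmer bound of \cite{gp2} gives the reverse inequality $\ex(n,H,F)\le \ex(n,H,K_{k+1})+o(n^{|V(H)|})$, so the two extremal numbers in fact agree up to $o(n^{|V(H)|})$, and the hypothesis that $H$ be weakly asymptotically $F$-Tur\'an-good is automatic; only the displayed lower bound is used below.)

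Now let $G$ be an $n$-vertex $F$-free graph with $\cN(H,G)\ge \ex(n,H,F)-o(n^{|V(H)|})$. Since $\chi(F)=k+1$, Proposition~\ref{ash} yields $\ex(n,K_{k+1},F)=o(n^{k+1})$, hence $\cN(K_{k+1},G)=o(n^{k+1})$, so by the removal lemma we can delete a set $D$ of $o(n^2)$ edges from $G$ to obtain a $K_{k+1}$-free graph $G'$. Every copy of $H$ lying in $G$ but not in $G'$ must use an edge of $D$, and a fixed edge is contained in $O(n^{|V(H)|-2})$ copies of $H$, so we destroy at most $|D|\cdot O(n^{|V(H)|-2})=o(n^{|V(H)|})$ copies. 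Combining this with the displayed inequality gives
\[
\cN(H,G')\ \ge\ \cN(H,G)-o(n^{|V(H)|})\ \ge\ \ex(n,H,F)-o(n^{|V(H)|})\ \ge\ \ex(n,H,K_{k+1})-o(n^{|V(H)|}).
\]

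Thus $G'$ is a $K_{k+1}$-free graph which is nearly extremal for the count of copies of $H$, and weak $K_{k+1}$-Tur\'an-stability produces a complete $k$-partite graph $T^{*}$ from which $G'$ arises by adding and removing $o(n^2)$ edges. Adding the edges of $D$ back shows $G$ arises from $T^{*}$ by adding and removing $o(n^2)$ edges as well; since $G$ was an arbitrary nearly extremal $F$-free graph, $H$ is weakly $F$-Tur\'an-stable. The second statement follows from exactly the same argument with $T_0=T^{*}=T(n,k)$: $K_{k+1}$-Tur\'an-stability of $H$ forces $\ex(n,H,K_{k+1})=(1+o(1))\cN(H,T(n,k))$, the chain above again gives $\cN(H,G')\ge \ex(n,H,K_{k+1})-o(n^{|V(H)|})$, and $K_{k+1}$-Tur\'an-stability then makes $G'$, hence $G$, obtainable from $T(n,k)$ by $o(n^2)$ edge modifications, i.e.\ $H$ is $F$-Tur\'an-stable.

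The only step needing real care is the removal step: one must check simultaneously that passing from $G$ to the $K_{k+1}$-free graph $G'$ loses only $o(n^{|V(H)|})$ copies of $H$ (so that $G'$ stays nearly extremal for $\ex(n,H,K_{k+1})$) and touches only $o(n^2)$ edges (so that ``close to a complete $k$-partite graph'' is preserved when passing back from $G'$ to $G$). Both are immediate from the removal lemma together with the trivial estimate that one edge lies in $O(n^{|V(H)|-2})$ copies of $H$; everything else is just bookkeeping with Proposition~\ref{ash}, the removal lemma, and the stability hypothesis.
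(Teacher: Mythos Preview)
Your proof is correct and follows essentially the same route as the paper's: apply Proposition~\ref{ash} and the removal lemma to pass to a $K_{k+1}$-free graph $G'$ losing only $o(n^{|V(H)|})$ copies of $H$, then invoke the $K_{k+1}$-stability hypothesis on $G'$ and pull the conclusion back to $G$. Your argument is in fact more carefully written than the paper's, which leaves implicit the step $\ex(n,H,F)\ge \ex(n,H,K_{k+1})-o(n^{|V(H)|})$; your observation that this makes the ``weakly asymptotically $F$-Tur\'an-good'' hypothesis redundant is a nice bonus.
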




\begin{proof} Let $G$ be an $n$-vertex $F$-free graph.
We start by applying the removal lemma and Proposition \ref{ash} to obtain a $K_{k+1}$-free graph $G_0$ by removing $o(n^2)$ edges. As we removed $o(n^{|V(H)|})$ copies of $H$ this way, we have that $\cN(H,G_0)\ge \ex(n,H,F)-o(n^{|V(H)|})=(1+o(1))\cN(H,T)$ for some complete $k$-partite graph $T$. As $H$ is weakly $K_{k+1}$-Tur\'an-stable, this means that $G_0$ can be obtained from $T$ by adding and removing $o(n^2)$ edges, thus so does $G$. The furthermore part follows the same way, $T=T(n,k)$ in that case.
\end{proof}

We continue with the proof of Theorem \ref{haszn2} that we restate here for convenience.

\begin{thm*}
Let $\chi(F)=k+1$, $\chi(H)=k$ and assume that $F$ has a color-critical vertex. Let $r$ be the smallest number such that there is a color-critical vertex $v$ in $F$ that is adjacent to exactly $r$ vertices of one of the color classes in the $k$-coloring of the graph we obtain by deleting $v$ from $F$. Assume that if we embed graphs of maximum degree less than $r$ into each part of a complete $k$-partite graph $T_0$, we do not obtain any copies of $H$ besides those contained in $T_0$.

Assume that there is an $n$-vertex $F$-free graph $G$ with $\cN(H,G)= \ex(n,H,F)$, such that $G$ can be obtained from a complete $k$-partite graph $T$ by adding and removing $o(n^2)$ edges. Then $H$ is weakly $F$-Tur\'an-good.
\end{thm*}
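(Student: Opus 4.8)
### Proof Proposal

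The plan is to take the near-extremal graph $G$, which by hypothesis can be obtained from a complete $k$-partite graph $T$ by adding and removing $o(n^2)$ edges, and to show that one can massage $G$ into a complete $k$-partite graph \emph{without decreasing} the number of copies of $H$. Since $\cN(H,G)=\ex(n,H,F)$ is already extremal, this will force $\ex(n,H,F)=\cN(H,T')$ for some complete $k$-partite $T'$, giving weak $F$-Tur\'an-goodness. Let $A_1,\dots,A_k$ be the parts of $T$; by Lemma \ref{neww} applied to the near-extremal situation, each $|A_i|=\Omega(n)$. Write $G_i=G[A_i]$ for the graph induced inside part $A_i$, and note that $\sum_i |E(G_i)|$ together with the number of missing cross-edges is $o(n^2)$.

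First I would argue that inside each part $A_i$ the induced graph $G_i$ must have maximum degree less than $r$. Suppose some vertex $w\in A_i$ has at least $r$ neighbors inside $A_i$. The key structural input is the color-critical vertex $v$ of $F$: deleting $v$ leaves a $k$-colorable graph $F-v$ in which $v$ was attached to exactly $r$ vertices of some color class (for the minimizing choice of $v$). This means $F$ embeds into any graph that looks like $T(n,k)$ with \emph{one} extra vertex joined to $r$ vertices in a single part and to everything in the other $k-1$ parts — more precisely, one maps the color classes of $F-v$ into the $k$ parts with the $r$ special neighbors of $v$ landing on the $r$ in-part neighbors of $w$, and $v$ onto $w$. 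Since cross-edges between parts are almost all present and $|A_j|=\Omega(n)$, a standard counting/greedy embedding (deleting the $o(n)$ "bad" vertices in each part that miss too many cross-edges) produces a copy of $F$ in $G$, contradicting $F$-freeness. Hence $\Delta(G_i)<r$ for every $i$.

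Now apply the hypothesis of the theorem: since each $G_i$ has maximum degree less than $r$, embedding these graphs into the parts of the complete $k$-partite skeleton creates no copies of $H$ beyond those already supported inside that skeleton. Concretely, deleting all in-part edges of $G$ and adding all missing cross-edges yields a complete $k$-partite graph $T'$ (on the same vertex partition $A_1,\dots,A_k$), and every copy of $H$ in $G$ — since $H$ has chromatic number $k$, any copy of $H$ uses all $k$ parts and by the hypothesis no copy can use an in-part edge — is also a copy of $H$ in $T'$. Therefore $\cN(H,G)\le \cN(H,T')$. Combined with $\cN(H,G)=\ex(n,H,F)\ge \cN(H,T')$ (as $T'$ is $F$-free, being complete $k$-partite and $\chi(F)=k+1$), we get equality, so $H$ is weakly $F$-Tur\'an-good.

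The main obstacle is the embedding argument in the second paragraph: one must be careful that the $o(n^2)$ edge modifications do not obstruct finding the copy of $F$. The right way to handle this is to first discard the set of "exceptional" vertices — those incident to $\omega(1)$ modified edges, of which there are only $o(n)$ — so that on the remaining vertex set $G$ genuinely contains all cross-edges among the parts, each part still of size $\Omega(n)$, and then the in-part witness degree-$r$ configuration at $w$ (which we may also assume avoids exceptional vertices, or handle separately) extends greedily to the full copy of $F$. A secondary point requiring care is verifying the claim that a copy of $H$ in $G$ cannot use any in-part edge: this is exactly where the "maximum degree less than $r$" conclusion feeds into the theorem's standing assumption, so the logical order — in-part max degree bound first, then invoke the no-extra-copies hypothesis — must be respected.
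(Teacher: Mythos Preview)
Your overall strategy matches the paper's: use Lemma~\ref{neww} to get linear-sized parts, show the in-part edge set has maximum degree less than $r$ by threatening to embed $F$ via the color-critical vertex, and then invoke the hypothesis on $H$ to conclude $\cN(H,G)\le \cN(H,T)$. The paper proceeds the same way.

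There is, however, a genuine gap precisely where you write ``which we may also assume avoids exceptional vertices, or handle separately.'' Your embedding argument only shows that no vertex of $V_i$ can have $r$ in-part neighbors \emph{in the good set} $A_i$ (vertices missing $o(n)$ cross-edges). If $w$ is exceptional, or if some of its $r$ in-part neighbors are exceptional, the greedy embedding of $F$ may fail, because $w$ might not reach $\Omega(n)$ vertices in some other part. So you have not yet ruled out a $K_{1,r}$ living entirely among exceptional vertices, and such edges could in principle create extra copies of $H$; since you need the \emph{exact} inequality $\cN(H,G)\le \cN(H,T)$, a crude $o(n^{|V(H)|})$ bound on those copies is not enough.

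The paper closes this gap as follows. Having shown that every $K_{1,r}$ among in-part edges must contain at least one exceptional vertex, it takes a minimum hitting set $X\subset V(G)\setminus A$ for these $K_{1,r}$'s and counts: every vertex of $X$ is missing $\Omega(n)$ cross-edges, so $\sum_{u\in X}r(u)=\Omega(n|X|)$; on the other hand the in-part edges touching $X$ number at most $\binom{|X|}{2}+|X|(r-1)=o(n|X|)$, forcing $|X|=0$. Thus there is no $K_{1,r}$ among the in-part edges at all, and only then does the hypothesis on $H$ apply to the \emph{entire} graph. This hitting-set step is the missing idea in your outline.
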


\begin{proof}
Let $k=\chi(F)-1$
and $G$ be an $n$-vertex $F$-free graph with $\cN(H,G)= \ex(n,H,F)$, that can be obtained from a complete $k$-partite graph $T$ with parts $V_1, \dots, V_k$ by adding and removing $o(n^2)$ edges. We pick $T$ such that we need to add and remove the least number of edges this way. In particular, every vertex $v\in V_i$ is connected at least as many vertices in every $V_j$ a in $V_i$ (otherwise we could move $v$ to $V_j$).

Let $E$ denote the set of edges in $G$ that are not in $T$ (i.e., those inside a part $V_i$).
Let $r(u)$ denote the number of edges incident to $u$ in $T$ that are not in $G$, i.e. the missing edges between $u$ and vertices in other part. Then we have $\sum_{u\in V(G)} r(u)=o(n^2)$, thus there are $o(n)$ vertices $u$ with $r(u)=\Omega(n)$. Let $A$ denote the set of vertices with $r(u)=o(n)$ and $A_i=A\cap V_i$, then $|A_i|=|V_i|-o(n)$. By Lemma \ref{neww}, we have that $|A_i|=\Omega(n)$. For $u\in V_i\setminus A_i$, we have that $u$ is adjacent to $\Omega(n)$ vertices in every $V_j$, thus in every $A_j$.

Recall that there are edges $vv_1,\dots,vv_r$ in $F$ such that by deleting these edges we obtain a $k$-partite graph with $v,v_1,\dots,v_r$ in the same part. Let $f_1$ denote the order of that part and $f_2,\dots, f_k$ denote the order of the other parts. 

We claim that every vertex in $V_i$ is adjacent to less than $r$ vertices of $A_i$. Assume otherwise, without loss of generality let $uu_1,\dots,uu_r$ be edges with $u\in V_1$, $u_1,\dots,u_r\in A_1$.
Let $B_i$ denote the neighborhood of $u$ in $A_i$, then $|B_i|=\Omega(n)$ and every vertex of $A\setminus A_i$ is connected to $|B_i|-o(n)$ vertices of $B_i$. We pick $f_1-r-1$ other vertices in $A_1$. 
These $f_1$ vertices have $|B_2|-o(n)$ common neighbors in $B_2$, we pick $f_2$ of them, and so on. For every $i$, we pick $f_i$ vertices from $B_i$ that are joined to every vertex picked earlier. This is doable, since all but $o(n)$ vertices of $B_i$ are connected to each of the vertices picked earlier. This way we find a copy of $F$ in $G$, a contradiction.

Let $X$ be a smallest set of vertices inside $V(G)\setminus A$ such that every $K_{1,r}$ inside $E$ contains at least one vertex of $X$. By the above, 
$\sum_{u\in X} r(u)=\Omega(n|X|)$. On the other hand, there are at most $\binom{|X|}{2}$ edges of $E$ inside $X$, and at most $|X|(r-1)$ edges of $E$ go out from $X$. Since $|X|=o(n)$, we have that either $\sum_{u\in X} r(u)=o(n|X|)$ (a contradiction), or $|X|=0$. In the latter case we have that by adding the edges of $E$ to $T$ we do not obtain any new copies of $H$, thus $\cN(H,G)\le \cN(H,T)$, completing the proof.
\end{proof}

We remark that this an example for a proof giving a stability result as well: if $|X|>0$ then $G$ contains at most $\ex(n,H,F)-\Omega(n^{|V(H)|-1}|X|)$ copies of $H$. In particular in the setting of Theorem \ref{haszn}, if an $F$-free $n$-vertex graph $G$ has chromatic number more than $k$, then $G$ contains $\ex(n,H,F)-\Omega(n^{|V(H)|-1})$ copies of $H$.

Let us continue with Theorem \ref{struc} that we restate here for convenience.

\begin{thm*}
Let $\chi(H)<\chi(F)=k+1$, and assume that $H$ is $F$-Tur\'an-stable. 
Let $G$ be an $n$-vertex $F$-free graph which contains $\ex(n,H,F)$ copies of $F$. Then for every vertex $u\in G$ we have $d_G(H,u)\ge (1+o(1))d_{T(n,k)}(H,v)$.
\end{thm*}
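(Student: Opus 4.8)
The plan is to run a Zykov-type symmetrisation on the extremal graph, feeding in the stability structure. Suppose the conclusion fails: there are $\varepsilon>0$ and arbitrarily large $n$ admitting an $n$-vertex $F$-free $G$ with $\cN(H,G)=\ex(n,H,F)$ and a vertex $u$ with $d_G(H,u)\le(1-\varepsilon)\,d_{T(n,k)}(H,v)$. Since $H$ is $F$-Tur\'an-stable, $G$ is obtained from $T(n,k)$ by adding and removing $o(n^2)$ edges; fix a partition $V_1,\dots,V_k$ of $V(G)$ witnessing this, chosen as in the proof of Theorem~\ref{haszn2} to minimise the number of modifications, so that each part has $(1/k+o(1))n$ vertices and all but $o(n)$ vertices are \emph{typical} (adjacent to all but $o(n)$ vertices of every other part and to at most $o(n)$ of their own part). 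Say $u\in V_1$.

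Next I would note that $u$ lies far below average: since $\sum_{x}d_G(H,x)=|V(H)|\,\ex(n,H,F)\ge |V(H)|\,\cN(H,T(n,k))=\sum_{x}d_{T(n,k)}(H,x)=(1+o(1))\,n\,d_{T(n,k)}(H,v)$, some vertex $v^\ast\ne u$ satisfies $d_G(H,v^\ast)\ge(1+o(1))\,d_{T(n,k)}(H,v)$. Form $G^\ast$ by deleting $u$ and inserting a new vertex $w$ that is a false twin of $v^\ast$, i.e.\ $N_{G^\ast}(w)=N_{G-u}(v^\ast)$. Then $\cN(H,G^\ast)=\ex(n,H,F)-d_G(H,u)+d_{G^\ast}(H,w)$, and since copies of $H$ through $w$ avoiding $v^\ast$ biject with copies through $v^\ast$ in $G-u$, while copies of $H$ through any prescribed pair of vertices number only $O(n^{|V(H)|-2})=o\big(d_{T(n,k)}(H,v)\big)$ (as $\chi(H)\le k$ gives $d_{T(n,k)}(H,v)=\Theta(n^{|V(H)|-1})$), we get $d_{G^\ast}(H,w)\ge(1-o(1))\,d_{T(n,k)}(H,v)$. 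Hence $\cN(H,G^\ast)\ge\ex(n,H,F)+(\varepsilon-o(1))\,d_{T(n,k)}(H,v)>\ex(n,H,F)$ for $n$ large, contradicting the definition of $\ex(n,H,F)$ on $n$ vertices — \emph{provided $G^\ast$ is $F$-free}.

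The crux, and the step I expect to be the main obstacle, is choosing $v^\ast$ so that $G^\ast$ is indeed $F$-free. A copy of $F$ in $G^\ast$ either lies in $G-u\subseteq G$ or uses $w$; one using $w$ but not $v^\ast$ becomes, after replacing $w$ by $v^\ast$, a copy of $F$ in $G-u$; so the only danger is a copy of $F$ through both $w$ and $v^\ast$, in which $w$ and $v^\ast$ play two non-adjacent vertices $a,b$ of $F$. That happens precisely when $G-u$ contains a copy of the graph $F/\{a,b\}$, obtained by contracting the non-edge $ab$, with $v^\ast$ as the contracted vertex. Here the key point is that any proper colouring of $F/\{a,b\}$ lifts to a proper colouring of $F$ giving $a,b$ the same colour, so $\chi(F/\{a,b\})\ge\chi(F)=k+1$; hence every copy of $F/\{a,b\}$ in $G$ uses an edge inside some part $V_i$. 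It therefore suffices to pick $v^\ast$ with $d_G(H,v^\ast)\ge(1-o(1))\,d_{T(n,k)}(H,v)$ that is not the contracted vertex of any such copy: for example a typical $v^\ast\in V_1$ whose neighbourhood contains neither a within-part edge nor an atypical vertex will do, because otherwise its (boundedly many, cross, typical) neighbours in the copy could be rejoined to a fresh typical vertex of $V_1$, yielding a copy of $F$ already inside $G$.

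The remaining difficulty is to guarantee that some near-optimal vertex has such a clean neighbourhood. I would handle this by first observing that the copies of $H$ meeting the $o(n)$ atypical vertices or the within-part edges account for only $o(n^{|V(H)|})$ copies in total, hence $o(n^{|V(H)|-1})$ on average per vertex, so that the vertices with a clean neighbourhood still carry almost all of $\sum_x d_G(H,x)$ and include one with $d_G(H,v^\ast)\ge(1-o(1))\,d_{T(n,k)}(H,v)$; the genuinely delicate case, to be excluded separately by a local-modification argument using extremality in the spirit of the proof of Theorem~\ref{haszn2}, is when an atypical vertex is adjacent to almost an entire part. With such a $v^\ast$ secured, $G^\ast$ is $F$-free and the inequality of the second paragraph gives the contradiction.
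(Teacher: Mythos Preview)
Your overall strategy—Zykov symmetrisation fed by the stability structure—is exactly the paper's, but you symmetrise to a \emph{single} vertex $v^\ast$, and this is where a genuine gap opens. You need $v^\ast$ such that cloning it cannot create a copy of $F$ through both $w$ and $v^\ast$, and you propose taking a typical $v^\ast\in V_1$ whose $G$-neighbourhood avoids all within-part edges and all atypical vertices. That condition is essentially never satisfiable: a single within-part edge between two typical vertices of $V_2$ lies in the neighbourhood of all but $o(n)$ vertices of $V_1$, so as soon as there is any such edge (and nothing in the hypotheses rules one out), almost no $v^\ast\in V_1$ has a clean neighbourhood. Your fallback averaging in the last paragraph conflates ``most copies of $H$ through $v^\ast$ are clean'' with ``$v^\ast$ has a clean neighbourhood in $G$''; the former says nothing about whether some copy of $F/\{a,b\}$ can be routed through $v^\ast$. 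The ``delicate case'' you flag (an atypical vertex adjacent to a whole part) is not the actual obstruction—ordinary within-part edges among typical vertices already break the scheme.

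The paper's remedy is to replace the single $v^\ast$ by a set $S\subset V_1$ of $|V(F)|$ vertices (chosen by averaging so that $\sum_{s\in S}f(s)=o(n^{|V(H)|-1})$, where $f(s)$ counts $H$-copies through $s$ lost when passing to $T(n,k)$), and to reconnect $u$ to the \emph{common} neighbourhood $\bigcap_{s\in S}N_G(s)$. Any copy of $F$ through the modified $u$ involves only $|V(F)|-1$ other vertices and hence misses some $s\in S$; since $N(u)\subseteq N(s)$, swapping $u$ for that $s$ yields a copy of $F$ already in $G$. This pigeonhole on $|S|=|V(F)|$ is precisely what eliminates your $F/\{a,b\}$ difficulty, with no case analysis on clean neighbourhoods. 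The number of $H$-copies gained is at least $d_G(H,S)/|S|-O(n^{|V(H)|-2})$, where $d_G(H,S)$ counts copies of $H$ meeting $S$ in exactly one vertex whose $H$-neighbours lie in $\bigcap_{s}N_G(s)$; each $s\in S$ contributes the same number of these by symmetry, and the choice of $S$ ensures this quantity is $(1+o(1))\,d_{T(n,k)}(H,v)$.
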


\begin{proof}
$G$ can be transformed to $T(n,k)$ by adding and removing $o(n^2)$ edges. Let $V_1$ be one of the partite sets of the resulting Tur\'an graph. Let $f(v)$ denote the number of copies of $H$ that are removed this way and contain $v$. Then we have $\sum _{v\in V(G)}f(v)=o(n^{|V(H)|})$. Consider a set $S$ of $|V(F)|$ vertices in $V_1$ such that $\sum _{v\in S}f(v)$ is minimal. Then by averaging $\sum _{v\in S}f(v)\le \frac{|S|}{|V_1}\sum _{v\in V_1}f(v)=o(n^{r-1})$.

Now we apply a variant of Zykov's symmetrization \cite{zykov}. Let us consider copies of $H$ that contain exactly one vertex $s$ of $S$, and if $sv$ is an edge of the copy of $H$, then $v$ is in the common neighborhood of $S$ in $G$ (i.e., we do not use the edges from $s$ to vertices not in the common neighborhood of $S$). Let
$d_G(H,S)$ denote the number of such copies. Observe that each vertex of $S$ is in $\frac{d_G(H,S)}{|S|}$ such copies of $H$.

Let $x$ denote the number of copies of $H$ that contain $u$ and a vertex from $S$, then $x=O(n^{r-2})$. If $d_G(H,u)<\frac{d_G(H,S)}{|S|}-x$, then we remove the edges incident to $u$ from $G$ and connect $u$ to the common neighborhood of $S$. This way we do not create any copy of $F$, as the copy should contain $u$, but $u$ could be replaced by any vertex of $S$ that is not already in the copy to create a copy of $F$ in $G$. We removed $d_G(H,u)$ copies of $H$, but added at least $\frac{d_G(H,S)}{|S|}-x$ copies, a contradiction. 

Therefore, we have \[d_G(H,u)\ge d_G(H,S)-x\ge d_{T(n,k)}(H,S)-\sum_{v\in S}f(r,v)-x=d_{T(n,k)}(H,S)-o(n^{r-1}).\]

Since $S\subset V_1$, the common neighborhood of $S$ in $T(n,k)$ is the same as the neighborhood of any vertex of $S$, thus $d_{T(n,k)}(H,S)=d_{T(n,k)}(H,s)$, completing the proof.
\end{proof}

We continue with the proof of Theorem \ref{resu}, which is too long to restate here.

\begin{proof}[Proof of Theorem \ref{resu}]
The first sentence of \textbf{(i)} follows from combining Proposition \ref{kovv} and Theorem \ref{haszn}. In the case $H$ is also $K_{k+1}$-Tur\'an-good, we have that the Tur\'an graph maximizes the number of copies of $H$ among complete $k$-partite $n$-vertex graphs. If $H$ is complete multipartite, we apply the result of Liu, Pikhurko, Sharifzadeh and Staden \cite{lpss} mentioned in the introduction, stating that $H$ is $K_{k+1}$-Tur\'an-stable. If $H$ is a path, we apply the result of Hei, Hou and Liu \cite{hhl}.

To prove \textbf{(ii)}, we combine Proposition \ref{kovv} and Theorem \ref{haszn2} with the result of Liu, Pikhurko, Sharifzadeh and Staden \cite{lpss}. We need to show that the assumption on $H$ of Theorem \ref{haszn2} is satisfied. Let us assume that $G$ is built from complete $k$-partite graph $T_0$ with embedding a graph with maximum degree less than $r$. Let $U_1,\dots,U_k$ be the parts of $T_0$ and $V_1,\dots,V_k$ be the color classes of $H$. Let $u,v\in U_1$ and assume that $uv$ is an edge in a copy of $H$. Without loss of generality, $u\in V_1$, $v\in V_2$. Then at most $2r-2$ other vertices of $U_1$ can be in $H$. This means that every $U_i$ contains either at most one color class of $H$ or at most $2r$ vertices of $H$. If each color class of $H$ has order more than $2r$, this is impossible.

To prove \textbf{(iii)}, we use results of
Lidick{\`y} and Murphy \cite{lm}. They proved that for $k\ge 3$, $C_5$ is $K_{k+1}$-Tur\'an-good and $K_{k+1}$-Tur\'an-stable. This implies that $C_5$ is weakly $F$-Tur\'an-stable by Proposition \ref{kovv}, hence $C_5$ is weakly $F$-Tur\'an-good by Theorem \ref{haszn}, where $\chi(F)=k+1$. A weakly $F$-Tur\'an-good and $K_{k+1}$-Tur\'an-good graph is clearly $F$-Tur\'an-good, since the complete $k$-partite graph with the most copies of $C_5$ is the Tur\'an graph.

To prove \textbf{(iv)}, we apply induction on the number of components. Let us assume that the statement holds for graphs with at most $\ell$ components, let $H$ be the vertex-disjoint union of $\ell$ cliques and $H'$ be another clique. Then we can use Proposition \ref{propi} to show that the vertex-disjoint union $H_1$ of $H$ and $H'$ is $F$-Tur\'an-stable. Then Theorem \ref{haszn} implies that $H_1$ is weakly $F$-Tur\'an-good. Let us assume that the extremal complete multipartite graph $T$ contains parts $A$ and $B$ with $|A|<|B|-1$. Then we move a vertex from $B$ to $A$. 

We claim that the number of copies of $H_1$ does not decrease this way. Indeed, every copy of $H_1$ intersects $A\cup B$ in a matching and some isolated vertices. Matchings are $K_3$-Tur\'an-good (first shown in \cite{gypl}), thus their number does not decrease this way. Clearly, such intersections are extended the same number of times to a copy of $H_1$ with vertices from the other parts, hence the number of copies of $H_1$ also does not decrease.
Repeating this we eventually arrive to the Tur\'an graph without decreasing the number of copies of $H_1$, which completes the proof.

To prove \textbf{(v)}, recall that in Section 2, we described how to obtain graphs $H_2$ and $H_3$ starting from $H$ and $H'$. These are generalizations of the constructions in \cite{gp3,gerb}. Therefore, applying Proposition \ref{propi} we obtain that the graphs listed are $K_{k+1}$-Tur\'an-stable. Theorem \ref{haszn} imply that they are weakly $F$-Tur\'an-good.
A weakly $F$-Tur\'an-good and $K_{k+1}$-Tur\'an-good graph is clearly $F$-Tur\'an-good, completing the proof.
\end{proof}

\bigskip

\textbf{Funding}: Research supported by the National Research, Development and Innovation Office - NKFIH under the grants KH 130371, SNN 129364, FK 132060, and KKP-133819.

\end{document}